\documentclass[a4paper,11pt]{amsart}
  \usepackage{amsmath}
  \usepackage{url,graphicx}
  \usepackage{amssymb, color, pstricks-add, manfnt}
  \usepackage{amsmath, amsthm,  amsfonts}

 \textwidth = 6.75 in
 \textheight = 9.0 in
 \oddsidemargin = -0.25 in
 \evensidemargin = -0.25 in
 \topmargin = -0.00 in
 \headheight = 0.0 in
 \headsep = 0.0 in

 \pagestyle{plain}
 \setcounter{page}{1}

  \theoremstyle{plain}
  \newtheorem{Theorem}{Theorem}[section]
  \newtheorem{Lemma}{Lemma}[section]

  \newtheorem{Corollary}{Corollary}[section]


  \numberwithin{equation}{section}
  \numberwithin{figure}{section}

\renewcommand{\baselinestretch}{1.00}
\parskip = 0.0in

\begin{document}

\title{On the Neumann problem for Monge-Amp\`ere type equations}

\author{Feida Jiang}
\address{Yau Mathematical Sciences Center, Tsinghua University, Beijing 100084, P.R. China}
\address{Centre for Mathematics and Its Applications, The Australian National University, Canberra ACT 0200, Australia}
\email{jiangfeida@math.tsinghua.edu.cn}

\author{Neil S. Trudinger}
\address{Centre for Mathematics and Its Applications, The Australian National University, Canberra ACT 0200, Australia}
\email{Neil.Trudinger@anu.edu.au}

\author{Ni Xiang}
\address{Faculty of Mathematics and Statistics, Hubei Key Laboratory of Applied Mathematics, Hubei University, Wuhan 430062, P.R. China}
\email{nixiang@hubu.edu.cn}

\thanks{This work was started when the first two authors met at the Mathematical Sciences Center, Tsinghua University, in May 2013. It was supported by the National Natural Science Foundation of China(No.11401306, No.11101132), the Australian Research Council(No.DP1094303), China Postdoctoral Science Foundation(No.2015M571010), the Jiangsu Natural Science Foundation of China(No.BK20140126) and Foundation of Hubei Provincial Department of Education(No.Q20120105).}

\subjclass[2000]{35J66, 35J96}

\date{\today}

\keywords{semilinear Neumann problem, Monge-Amp\`ere type equations, second derivative estimates}

\abstract {In this paper, we study the global regularity for regular
Monge-Amp\`ere type equations associated with a semilinear Neumann boundary conditions.
By establishing {\it a priori} estimates for second order derivatives, the
classical solvability of the Neumann boundary value problem is proved under natural conditions.
The techniques build upon the delicate and intricate treatment of the standard Monge-Amp\`ere case
by Lions, Trudinger and Urbas in 1986 and the recent barrier constructions and second derivative bounds
by Jiang, Trudinger and Yang for the Dirichlet problem. We also consider more general oblique boundary
 value problems in the strictly regular case.}
\endabstract


\maketitle

\baselineskip=12.8pt
\parskip=3pt
\renewcommand{\baselinestretch}{1.38}

\section{Introduction}\label{Section 1}

\vskip10pt
In this paper, we consider the following semilinear Neumann boundary value problem for the Monge-Amp\`ere type equation
\begin{eqnarray}
&\det [D^2u-A(x,u,Du)]  =  B(x,u,Du),   & \quad \mbox{in } \ \Omega,\label{1.1}\\
&              D_\nu u  = \varphi(x,u), & \quad \mbox{on } \  \partial\Omega,\label{1.2}
\end{eqnarray}
where $\Omega$ is a bounded domain in $n$ dimensional Euclidean space $\mathbb{R}^n$ with smooth boundary, $Du$ and $D^2u$ denote the gradient vector and the Hessian matrix of the second order derivatives of the function $u:\Omega\rightarrow \mathbb{R}$ respectively, $A$ is a given $n\times n$ symmetric matrix function defined on $\Omega \times \mathbb{R} \times \mathbb{R}^n$, $B$ is a positive scalar valued function on $\Omega \times \mathbb{R} \times \mathbb{R}^n$, $\varphi$ is a scalar valued function defined on $\partial \Omega \times \mathbb{R}$ and $\nu$ is the unit inner normal vector field on $\partial \Omega$. As usual, we use $x$, $z$, $p$, $r$ to denote points in $\Omega$, $\mathbb{R}$, $\mathbb{R}^n$, $\mathbb{R}^{n\times n}$ respectively.
A solution $u\in C^2(\Omega)$ of equation \eqref{1.1} is
elliptic  when the augmented Hessian matrix $Mu =  D^2u-A(x,u,Du) $ is positive definite,
that is $Mu>0$, which implies $B>0$. Also, a function $u$ satisfying $Mu>0$ is called an elliptic function of the equation \eqref{1.1}. Since the matrix $A$ determines the  augmented Hessian matrix $Mu$, we also call an elliptic solution (or function) an $A$-admissible solution (or function) or, by analogy with the case $A=0$, an $A$-convex solution (or function).

We shall establish an existence theorem together with {\it a priori} estimates for elliptic solutions of the Neumann boundary value problem \eqref{1.1}-\eqref{1.2} in this paper, which extend the special case where $A$ is independent of $p$ in \cite{LTU1986}. For this purpose, we need appropriate  assumptions on $A$, $B$, $\varphi$ and $\Omega$. Assume that the matrix $A$ is twice differentiable with respect to $p$ and $A$, $B$ and $\varphi$ are  differentiable with respect to $z$. Following \cite{Tru2006}, we call the matrix $A$ regular in $\Omega$ if $A$ is co-dimension one convex with respect to $p$, in the sense that
\begin{equation}\label{regular}
A_{ij,kl}(x,z,p)\xi_i\xi_j\eta_k\eta_l \geq 0,
\end{equation}
for all $(x,z,p)\in\Omega\times \mathbb{R}\times\mathbb{R}^n$,
$\xi,\eta\in\mathbb{R}^n$, $\xi\perp\eta$,
where $A_{ij,kl}=D^2_{p_kp_l}A_{ij}$.
If the inequality  \eqref{regular} is strict, then the matrix $A$ is called strictly regular.
We also define the matrix  $A$ to be non-decreasing,  (strictly increasing), with respect to $z$, if
\begin{equation}\label{monotone A}
D_zA_{ij}(x,z,p)\xi_i\xi_j\ge 0, (>0),
\end{equation}
for all $(x,z,p)\in\Omega\times \mathbb{R}\times\mathbb{R}^n$, $\xi\in \mathbb{R}^n$.
The inhomogeneous term $B$ and boundary function $\varphi$ are also non-decreasing, (strictly increasing),
with respect to $z$, if
\begin{equation}\label{monotone B}
B_z(x,z,p)\ge 0, (>0),
\end{equation}
for all $(x,z,p)\in\Omega\times \mathbb{R}\times\mathbb{R}^n$ and
\begin{equation}\label{monotone varphi}
\varphi_z(x,z)\ge 0, (>0),
\end{equation}
for all $(x,z)\in\partial\Omega\times \mathbb{R}$.
Note that if we write the boundary value problem \eqref{1.1}-\eqref{1.2} in the general form
\begin{eqnarray}
&\mathcal F[u]:=F(x,u,Du,D^2u) = 0,   & \quad \mbox{in } \ \Omega,\label{1.7}\\
&   \mathcal G[u]:=G(x,u,Du)  = 0, & \quad \mbox{on } \  \partial\Omega,\label{1.8}
\end{eqnarray}
where $F$ and $G$ are defined by
\begin{eqnarray}
&F(x,z,p,r) =\det [r-A(x,z,p)] - B(x,z,p), \label{1.9}\\
&G(x,z,p) = \nu \cdot p -\varphi(x,z), \label{1.10}
\end{eqnarray}
then $A,B$ and $\varphi$ non-decreasing, (strictly increasing), in $z$, correspond to the standard
monotonicity conditions, $F_z\le 0, G_z\le 0$, ($F_z< 0, G_z< 0$) for symmetric matrices $r$ satisfying
$r > A(x,z,p)$, that is for points $(x,z,p,r) \in \Omega \times \mathbb{R} \times \mathbb{R}^n\times
\mathbb{R}^{n\times n}$, where $\mathcal F$ is elliptic.

As with \cite{LTU1986}, we also need the domain $\Omega$ to satisfy an appropriate uniform convexity condition.  Adapting \cite {Tru2006}, we define the domain $\Omega$ to be  uniformly $A$-convex, ($A$-convex),
with respect to the boundary function $\varphi$ and an interval valued function $\mathcal I$ on $\partial\Omega$
if $\Omega\in C^2$ and
 \begin{equation}\label{A convexity}
  (D_i\nu_j(x) -D_{p_k}A_{ij}(x,z,p)\nu_k)\tau_i\tau_j < 0, (\le 0),
  \end{equation}
  for all $(x,z,p)\in\partial\Omega\times \mathbb{R}\times\mathbb{R}^n$, satisfying
  $p\cdot\nu(x) \ge \varphi( x,z)$, $z\in\mathcal I(x)$ and vectors
  $\tau=\tau(x)$ tangent to $\partial\Omega$. For a given function $u_0$ on $\partial\Omega$, we define $\Omega$ to be  uniformly $A$-convex, ($A$-convex),
with respect to  $\varphi$ and $u_0$ if \eqref{A convexity} holds for all $p\cdot\nu(x) \ge \varphi( x,u_0(x))$, that is $\mathcal I = \{u_0\}$.

From the regularity of $A$ \eqref{regular},  we can equivalently replace the boundary inequality $p\cdot\nu \ge \varphi( x,z)$ by the boundary equality $p\cdot\nu = \varphi( x,z)$, in the above definitions, as  $D_{p_\nu}A_{ij}(x,z,p)\tau_i\tau_j$ is then non-decreasing with respect to $p_\nu$. This leads us to a further definition which is independent of the boundary condition \eqref{1.2}. Namely $\Omega$ is uniformly $A$-convex with respect to
$u\in C^1(\bar\Omega)$ if
\begin{equation}\label{A convexity wrt u}
  (D_i\nu_j -D_{p_k}A_{ij}(\cdot,u,Du)\nu_k)\tau_i\tau_j \le -\delta_0, \quad \mbox{on } \  \partial\Omega,
  \end{equation} for all vectors
  $\tau=\tau(x)$ tangent to $\partial\Omega$. Accordingly if $A$ is regular,  $\Omega$ is  uniformly $A$-convex
with respect to  $\varphi$ and $u$ and $u$ satisfies \eqref{1.2}, it follows that $\Omega$ is uniformly $A$-convex with respect to $u$.

In order to use the regularity of $A$ in its most general form, we will need  to assume the existence of a
 supersolution $\bar u$ to  \eqref{1.1} satisfying
\begin{equation}
\det [D^2\bar u-A(x,\bar u,D\bar u)]  \le  B(x,\bar u,D\bar u),    \quad \mbox{in } \ \Omega,\label{super equation}\\
\end{equation}
together with the same boundary condition,
\begin{equation}
D_\nu \bar u =  \varphi(x,\bar u),  \quad \mbox{on } \  \partial\Omega. \label{super boundary}
\end{equation}
We then have the following global second derivative estimate.

\begin{Theorem}\label{Th1.1}
Let $u\in C^4(\Omega)\cap C^{3}(\bar \Omega)$ be an elliptic solution of the Neumann problem \eqref{1.1}-\eqref{1.2} in a $C^{3,1}$  domain $\Omega\subset\mathbb{R}^n$, which is uniformly $A$-convex with respect to $u$, where $A\in C^2(\bar \Omega\times \mathbb{R}\times \mathbb{R}^n)$ is regular and non-decreasing, $B>0,\in C^2(\bar \Omega\times \mathbb{R}\times \mathbb{R}^n)$ is non-decreasing and $\varphi\in C^{2,1}(\partial \Omega\times \mathbb{R})$ is non-decreasing. Suppose there exists an elliptic supersolution $\bar u\in C^2(\bar \Omega)$ satisfying \eqref{super equation}-\eqref{super boundary}.
Then we have the estimate
\begin{equation}\label{global C2 bound}
\sup\limits_\Omega |D^2 u|\le C,
\end{equation}
where $C$ is a constant depending on $n, A, B, \Omega, \bar u, \varphi, \delta_0$, and $|u|_{1;\Omega}$.
\end{Theorem}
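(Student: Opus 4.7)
The proof follows the classical Pogorelov scheme adapted to the oblique derivative setting: first I would reduce the interior estimate to a boundary estimate via a maximum principle argument on an auxiliary quantity in $\bar\Omega$, then establish the boundary estimate by separately bounding the pure tangential, mixed tangential-normal, and pure normal components of $Mu$ at each point of $\partial\Omega$. Throughout, the linearized operator $\mathcal L = F^{ij}D_{ij} + (\text{lower order})$, where $F^{ij}$ denotes the cofactor matrix of $Mu$, is the central object.

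\medskip

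\textbf{Reduction to the boundary.} I would consider a Pogorelov-type auxiliary function of the form
\[
v(x,\xi) = \log M_{\xi\xi}(x) + \tfrac{\kappa}{2}|Du|^2 + g(u - \bar u),
\]
maximized over $x \in \bar\Omega$ and unit vectors $\xi$, where $M_{\xi\xi} = (D_{ij}u - A_{ij}(x,u,Du))\xi_i\xi_j$. Linearizing the equation and differentiating twice in the direction $\xi$, the standard computation used for the Dirichlet problem by Jiang--Trudinger--Yang invokes the regularity \eqref{regular} of $A$ to absorb the bad third-order terms (via the codimension-one convexity applied with $\eta = \xi$ and $\tau \perp \xi$), and the non-decreasing hypotheses on $A,B$ to control the lower-order terms. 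The supersolution $\bar u$ then plays the role of a barrier: at an interior maximum of $v$, $\mathcal L(u - \bar u) \le 0$ by the concavity of $(\det)^{1/n}$, and a suitable choice of $g$ converts this into the negative term required to close the inequality, forcing the maximum of $v$ to occur on $\partial\Omega$.

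\medskip

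\textbf{Boundary estimate.} At a boundary point $x_0$ I would fix an orthonormal frame $\{\tau_1,\dots,\tau_{n-1},\nu\}$. For the \emph{pure tangential} component, differentiating the boundary condition \eqref{1.2} twice in a tangential direction $\tau$ and inserting the identity into the definition of $Mu$, the second fundamental form of $\partial\Omega$ combines with $D_{p_k}A_{ij}\nu_k\tau_i\tau_j$ to produce exactly the expression appearing in \eqref{A convexity wrt u}; the uniform $A$-convexity with respect to $u$ then yields $M_{\tau\tau}(x_0) \le C$. For the \emph{mixed} component, following Lions--Trudinger--Urbas I would construct an auxiliary function of the form $w = \pm T(u - \varphi(x,u)) + \alpha |D(u - \bar u)|^2 - \beta d$ on a boundary neighborhood, where $T = \tau^k(x)D_k$ is a smooth tangential extension, $d = \mathrm{dist}(\cdot,\partial\Omega)$, and $\alpha,\beta$ are large constants. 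Applying $\mathcal L$ and using the boundary condition together with the $A$-convexity of $\Omega$ and the barrier provided by $\bar u$, the maximum principle on a half-ball produces $|M_{\tau\nu}(x_0)| \le C$. Finally, once the $(n-1)\times(n-1)$ tangential block of $Mu$ is bounded above (and, via a standard eigenvalue argument using $\det Mu = B > 0$, bounded below away from zero), the equation itself forces the \emph{pure normal} estimate $M_{\nu\nu}(x_0) \le C$.

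\medskip

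\textbf{Main obstacle.} I expect the mixed tangential-normal estimate to be the technical core. In LTU86 the matrix $A$ is absent, whereas here the $p$-dependence of $A$ generates additional cubic terms in the third derivatives of $u$ when differentiating the equation along the tangential operator $T$, and the tangential extension of $T$ must be chosen so that its commutator with the operator $Mu$ respects the codimension-one convexity \eqref{regular} rather than producing terms of indefinite sign. Designing the auxiliary function so that the supersolution $\bar u$, the uniform $A$-convexity of $\Omega$, and the regularity of $A$ all cooperate --- without spoiling the signs near the portion of $\partial\Omega$ used in the barrier --- is the delicate step that fuses the LTU86 Neumann technique with the Jiang--Trudinger--Yang Dirichlet construction.
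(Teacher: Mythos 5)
Your proposal misassigns the difficulties among the boundary second-derivative components, and as a result the key device of the paper is missing. In the Neumann setting the \emph{mixed} tangential--normal bound $|D_{\tau\nu}u|\le C$ is the trivial one: it follows directly from a single tangential differentiation of $D_\nu u=\varphi(x,u)$, as in \eqref{2.3}; no auxiliary function or half-ball barrier is needed. The \emph{double normal} bound is established next, by a straightforward barrier argument using the $A$-convexity of the domain to build a barrier $\phi$ satisfying \eqref{domain A convexity}, applied to $h=D_\nu u-\varphi$ (this is where \eqref{A convexity wrt u} actually enters, not as a direct constraint on $M_{\tau\tau}$). The genuinely hard estimate — the technical core of the whole theorem — is the \emph{double tangential} one, and your treatment of it is the weak point: differentiating the boundary condition twice tangentially gives a relation between $D_\nu u_{\tau\tau}$ and $D^2u$, not a bound on $M_{\tau\tau}$ by itself; one still needs a sign on $D_\nu u_{\tau\tau}$, which is exactly what the maximum principle argument is for. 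Note also that the paper's \eqref{combine**} already requires the double normal estimate \eqref{3.10} as an input, so the ordering ``tangential, then mixed, then normal'' cannot work.

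The missing idea in your auxiliary function is the correction term $v'(\cdot,\xi)=2(\xi\cdot\nu)\xi_i'(D_i\varphi - D_kuD_i\nu_k - A_{ij}\nu_j)$ subtracted inside the exponential-weighted quantity, i.e.\ $v=e^{\frac{\alpha}{2}|Du|^2+\kappa\Phi}(w_{\xi\xi}-v')$. This is precisely the ``key trick'' from Lions--Trudinger--Urbas: it is constructed so that $v'(x_0,\xi)=0$ when $\xi$ is tangential at a boundary point, and so that when $\xi$ is neither tangential nor normal, $w_{\xi\xi}-v'$ reduces the off-diagonal term $2(\xi\cdot\tau)(\xi\cdot\nu)w_{\tau\nu}$ and makes $v(x_0,\xi)$ a convex combination of $v(x_0,\tau)$ and $v(x_0,\nu)$, so that the oblique-$\xi$ case collapses to the tangential and normal cases. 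Without this term, a maximum attained on $\partial\Omega$ in a genuinely oblique direction cannot be controlled. Your log-form $v=\log M_{\xi\xi}+\cdots$ with no such correction would close the interior case (your Pogorelov step is essentially right and the role of the supersolution barrier is correct) but would break down in Case~2 of the paper. Also, the supersolution/barrier does not in fact force the maximum to the boundary: the paper estimates $w_{11}(x_0)$ directly when $x_0$ is interior by taking $\alpha,\kappa$ large in \eqref{2.34}, with the barrier $\Phi$ supplying the positive $\kappa\mathcal T$ that closes that inequality.
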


Theorem  \ref{Th1.1} extends Theorem 3.3 in \cite{LTU1986} except for the supersolution hypothesis as a supersolution is constructed in \cite{LTU1986} in the course of the proof. We also point out that, as in
\cite{LTU1986}, the restriction to the Neumann condition is critical for our proof and moreover as shown by the Pogorelov example, (see  \cite{Urbas1995}, \cite{Wang1992}),  one cannot generally expect second derivative estimates and classical solutions of \eqref{1.1}-\eqref{1.2} for $A=0$, when the geometric normal $\nu$ is replaced by an oblique vector $\beta$ satisfying $\beta\cdot\nu > 0$, that is in \eqref{1.10},
\begin{equation}\label{G in oblique form}
G(x,z,p) = \beta\cdot p - \varphi(x,z),
\end{equation}
no matter how smooth $\beta$, $\varphi$, $B$ and $\partial \Omega$ are. However if the matrix function $A$ is
strictly regular on $\bar \Omega$ so that we have a positive lower bound in \eqref{regular} when $z$ and $p$ are bounded, then the proof is much simpler and also embraces oblique boundary conditions. Moreover in this case the monotonicity and supersolution hypotheses in Theorem  \ref{Th1.1} can be dispensed with. Typically second derivative behaviour for equation \eqref{1.1} in the strictly regular case is closer to that for uniformly elliptic equations while the challenge in the general case is to carry over the more intricate Monge-Amp\`ere case,  $A = 0$.
Following \cite{LTU1986}, we can also relax the supersolution hypothesis for uniformly convex domains in the special case when $D_{px} A = 0$, that is
\begin{equation} \label {special case}
A(x,z,p) = A_0(x,z) + A_1(z,p),
\end{equation}
where $A_0\in C^2(\bar \Omega\times \mathbb{R})$ and $A_1\in C^2(\mathbb{R}\times \mathbb{R}^n)$ is regular.

From Theorem \ref{Th1.1}, we obtain classical existence theorems for \eqref{1.1}-\eqref{1.2} under further hypotheses ensuring estimates for solutions and their gradients. For solution estimates, by virtue of the comparison principle we can simply assume the existence of  bounded subsolutions and supersolutions.

However more specific conditions for solution bounds will be treated in Section \ref{Section 3} of this paper, including an extension of the Bakel'man condition in Theorem 2.1 of \cite{LTU1986}. For the gradient estimate we adopt the same structure condition used for the Dirichlet problem in \cite{JTY2013}, namely
\begin{equation}\label{QS}
A(x,z,p) \ge - \mu_0 (1+ |p|^2)I,
\end{equation}
for all $x\in \Omega$, $|z|\le M_0$, $p\in \mathbb{R}^n$ and some positive constant $\mu_0$ depending on the constant $M_0$. Condition \eqref{QS} provides a simple gradient bound for $A$-convex functions $u$ in terms of a lower bound for $D_\nu u$  on the boundary. Combining the second derivative bounds with the lower order bounds and the global second derivative H\"{o}lder estimates as in \cite{LieTru1986,LT1986,LTU1986,Tru1984}, we establish the following existence result by the method of continuity.

\begin{Theorem}\label{Th1.2}
Suppose that $A, B, \varphi, \bar u$ and $\Omega$ satisfy the hypotheses of Theorem \ref{Th1.1}, with either $A$, $B$
 or $\varphi$ being strictly increasing. Assume also  condition \eqref{QS} and
 that there exists an elliptic subsolution $\underline u \in
C^2(\Omega)\cap C^1(\bar \Omega)$ of equation \eqref{1.1}, with $D_\nu \underline u \ge \varphi(\cdot, \underline u)$
on $\partial\Omega$ and that $\Omega$ is uniformly $A$-convex with respect to $\varphi$ and
$\mathcal I = [\underline u, \bar u]$, in the sense of \eqref{A convexity}.
Then the Neumann boundary value problem
\eqref{1.1}-\eqref{1.2} has a unique elliptic solution $u\in
C^{3,\alpha}(\bar \Omega)$ for any $\alpha<1$.
\end{Theorem}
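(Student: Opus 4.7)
The plan is to apply the method of continuity, embedding \eqref{1.1}-\eqref{1.2} into a one-parameter family of Neumann problems $F_t[u]=0$, $G_t[u]=0$ for $t\in[0,1]$, with $t=1$ reproducing the target problem and $t=0$ admitting an obvious elliptic solution. A natural choice is to interpolate the right-hand sides of \eqref{1.1} and \eqref{1.2} linearly with quantities for which $\bar u$ solves the problem at $t=0$ (for instance $B^t = tB+(1-t)\det[D^2\bar u - A(\cdot,\bar u,D\bar u)]$ and $\varphi^t(x,z) = t\varphi(x,z) + (1-t)D_\nu\bar u(x)$), arranged so that the monotonicity of $B^t,\varphi^t$ in $z$ and the sub/supersolution bracketing by $\underline u$ and $\bar u$ are preserved throughout. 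Let $\mathcal T\subset[0,1]$ denote the set of $t$ for which a solution $u^t\in C^{3,\alpha}(\bar\Omega)$ exists; since $0\in\mathcal T$, it suffices to show $\mathcal T$ is both open and closed.

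\textbf{Closedness via a priori estimates.} I would establish uniform $C^{3,\alpha}(\bar\Omega)$ bounds on $u^t$ independent of $t\in\mathcal T$. First, the strict monotonicity of one of $A,B,\varphi$ makes the comparison principle available, placing $\underline u\le u^t\le\bar u$ and yielding a $C^0$ bound. Next, condition \eqref{QS} provides a gradient bound for any $A$-convex function in terms of its boundary normal derivative, which is in turn controlled by the Neumann condition combined with the $C^0$ bound (as developed in Section \ref{Section 3}). Theorem \ref{Th1.1} then supplies the $C^2$ bound; its hypotheses are preserved uniformly in $t$ because $\Omega$ is assumed uniformly $A$-convex with respect to $\varphi$ on the interval $\mathcal I=[\underline u,\bar u]$, which, via the remark following \eqref{A convexity}, upgrades to $A$-convexity with respect to each intermediate $u^t$. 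With $D^2 u^t$ bounded, \eqref{1.1} becomes a uniformly elliptic fully nonlinear equation with concave structure, so the global second derivative H\"older estimates for oblique boundary value problems in \cite{LieTru1986,LT1986,LTU1986,Tru1984} yield a uniform $C^{2,\alpha}(\bar\Omega)$ bound, and linear Schauder theory applied to differentiated equations lifts this to $C^{3,\alpha}(\bar\Omega)$.

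\textbf{Openness and uniqueness.} At any solution $u^t$, the linearizations of $\mathcal F$ and $\mathcal G$ form a linear oblique boundary value problem which is uniformly elliptic (since $Mu^t>0$), has smooth coefficients, and whose zeroth-order coefficients in both the equation and the boundary condition are $\le 0$ with a strict inequality contributed by whichever of $A,B,\varphi$ is strictly increasing. Classical Schauder theory for oblique derivative problems then gives a bijection from $C^{3,\alpha}(\bar\Omega)$ onto $C^{1,\alpha}(\bar\Omega)\times C^{2,\alpha}(\partial\Omega)$, and the implicit function theorem yields openness of $\mathcal T$. Uniqueness of the solution at $t=1$ follows from the same comparison argument used for the $C^0$ estimate.

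\textbf{Main obstacle.} The subtlest point, I expect, is ensuring that the hypotheses of Theorem \ref{Th1.1}, particularly the uniform $A$-convexity of $\Omega$ with respect to the unknown solution, hold uniformly along the homotopy. This is precisely what the stronger hypothesis that $\Omega$ be uniformly $A$-convex with respect to $\varphi$ and $\mathcal I=[\underline u,\bar u]$ is designed to deliver, since it encodes exactly the interval of values that any intermediate solution $u^t$ is known a priori to take, thereby guaranteeing that the $C^2$ bound and hence all subsequent estimates are uniform in $t$.
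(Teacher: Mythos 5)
Your proposal follows essentially the same route as the paper: a priori $C^0$, $C^1$, $C^2$ and $C^{2,\alpha}$ bounds (comparison principle, condition \eqref{QS}, Theorem~\ref{Th1.1}, and the H\"older estimates of \cite{LT1986,LieTru1986,Tru1984}) feed a method of continuity starting from the supersolution $\bar u$ at $t=0$, with uniqueness from Lemma~\ref{comparison}; the paper is simply terser and cites Theorems 17.22 and 17.28 of \cite{GTbook} for the continuity step rather than writing out openness and closedness. One detail worth making explicit, which the paper addresses and your sketch glosses over: Theorem~\ref{Th1.1} is stated for solutions in $C^4(\Omega)\cap C^3(\bar\Omega)$, whereas the continuity method directly yields only $C^{2,\alpha}(\bar\Omega)$, so one should first assume extra smoothness of $A,B,\varphi,\Omega$ (to bootstrap via Schauder to $C^{4,\alpha}$ along the homotopy) and then pass to the stated regularity by approximation, or alternatively carry the second derivative estimate over to $W^{4,n}$ solutions via Aleksandrov--Bakel'man maximum principles.
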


The uniqueness of solutions follows from the comparison principle for elliptic solutions of general oblique boundary value problems, \eqref{1.7}-\eqref{1.8}; see Lemma \ref{comparison}. The regularity for the solution $u$ in Theorem \ref{Th1.2} can be improved by the linear elliptic theory \cite{GTbook} if the data are sufficiently smooth. For example, if $A$, $B$, $\varphi$ and $\partial \Omega$ are $C^\infty$, then the solution $u\in C^\infty(\bar \Omega)$. From the monotonicity of $\varphi$, it is also enough to assume  \eqref{A convexity} only holds for
$p\cdot \nu \ge \varphi(\cdot, \underline u)$ and $\underline u \le z \le \bar u$. Moreover if $A$ is independent of $z$, there is no need for the last inequality. Also taking account of our remarks after the statement of Theorem \ref{Th1.1}, we only need to assume the supersolution $\bar u$ satisfies \eqref{super equation} at points where it is elliptic and the boundary inequality $D_\nu \bar u \le \varphi(\cdot, \bar u)$, instead of \eqref{super boundary}, if either
 $A$ satisfies \eqref {special case} with $\Omega$ also uniformly convex  or $A$ is strictly regular in $\bar\Omega$.

The regular condition was originally introduced in \cite {MTW2005} in its strict form for interior regularity of potential functions in optimal transportation with the weak form \eqref{regular} subsequently introduced in \cite{TruWang2009} for global regularity; (see also \cite{Tru2006}). It was subsequently shown to be sharp for $C^1$ regularity of potential functions in
\cite{Loeper2009}.
Optimal transportation equations are special cases of prescribed Jacobian equations, which have the general form,
 \begin{equation}\label{PJE}
|\det DY (\cdot,u,Du)|  =  \psi(\cdot,u,Du),
\end{equation}
where $Y$ is a $C^1$ mapping from $\Omega\times\mathbb{R}\times\mathbb{R}^n$ into $\mathbb{R}^n$, $\psi$ is a non-negative scalar valued function on $\Omega \times \mathbb{R} \times \mathbb{R}^n$.
Assuming $\det Y_p\ne 0$, we see that for elliptic solutions, equation \eqref{PJE} can be written in the form
\eqref{1.1} with
\begin{equation} \label{def:AB}
A = - Y^{-1}_p(Y_x+Y_z\otimes p),
\quad B = (\det Y_p)^{-1}\psi.
\end{equation}
 The natural boundary value problem for the prescribed Jacobian equation is the \emph{second boundary
value problem}  to prescribe the image,
\begin{equation} \label{bvp}
Tu(\Omega): = Y (\cdot,u,Du)(\Omega) = \Omega^*,
\end{equation}
where $\Omega^*$ is another given domain in  $\mathbb{R}^n$. The global regularity of the second boundary value problem \eqref{PJE}-\eqref{bvp} has already been studied in \cite{Caffarelli1996, Urbas1997, TruWang2009, Tru2008, Tru2013} for different forms of the mapping $Y$.  As shown   in \cite{TruWang2009} in the optimal transportation case and in \cite{Tru2008} in the general case, condition \eqref{bvp} implies an oblique nonlinear boundary condition for elliptic functions $u$, that is \eqref{1.8} holds for a function $G \in C^1(\partial\Omega\times\mathbb{R}\times\mathbb{R}^n)$ with
\begin{equation}\label{oblique}
G_p(\cdot,u,Du)\cdot\nu > 0, \quad \mbox{on } \  \partial\Omega.
\end{equation}
The crucial estimate in these papers is the control on the obliqueness, that is an estimate of the form, $G_p\cdot\nu\ge\delta$ for a positive constant  $\delta$ and this is done in \cite{TruWang2009} in the optimal transportation case, and extended to the general case in \cite{Tru2008}, under appropriate uniform convexity conditions on the domain and target, with the latter equivalent to the uniform concavity of the function $G$ with respect to the $p$ variables. Because we are defining obliqueness with respect to the inner normal, in agreement with \cite{LTU1986}, our function $G$ is the negative of that in
\cite{Urbas1997,Tru2008,TruWang2009}.  Once the obliqueness is estimated, the boundary second derivative bounds follow in \cite{TruWang2009, Tru2008, JT2014}   from the same uniform convexity conditions, together with the regular condition \eqref{regular}, similarly to  the Monge-Amp\`ere case in \cite{Urbas1998}. Note that the uniform concavity of $G$ excludes the Neumann condition treated here and moreover the derivation of the boundary $C^2$ estimate is  much simpler, being somewhat analogous to using the strict regular condition. We also point out a recent paper \cite{CLY} considering optimal transportation on a hemisphere  where the obliqueness is estimated without using any uniform convexity of domains, which still gives the boundary $C^2$ estimate in the two dimensional case. Prescribed Jacobian equations also arise
in  geometric optics where solutions correspond to reflectors or refractors transmitting light rays from a source to a target with prescribed intensities; (see for example \cite{Wang1996, KW2010, Tru2011, Tru2014, JT2014, LT2014} and references therein).

On the geometric side, the Neumann boundary value problem in the more general context of augmented Hessian equations on manifolds arises in the study of the higher order Yamabe problem in conformal geometry; (see
 \cite{SSChen2007,SSChen2009,JLL2007,LiLi2006,LiNguyen2014}). To explain this we let $(\mathcal{M},g)$ be a smooth compact Riemannian manifold of dimension $n\ge 3$ with nonempty smooth boundary $\partial \mathcal{M}$,  let
$A_g$ denote the Schouten tensor of the metric $g$ and let  $\lambda(A_g)=(\lambda_1(A_g),\cdots, \lambda_n(A_g))$ denote the eigenvalues of $A_g$. Let  $\Gamma\subset \mathbb{R}^n$ be an open convex symmetric cone with vertex at the origin and $f$ be a smooth symmetric function in $\Gamma$. The fully nonlinear Yamabe problem on manifolds with boundary is to find a metric $\tilde g$ in the conformal class of the metric $g$ with a prescribed function of eigenvalues of the Schouten tensor and prescribed mean curvature. For example, for a given constant $c\in \mathbb{R}$, we are interested in finding a metric $\tilde g$ conformal to $g$ such that
\begin{equation}\label{Fully nonlinear Yamabe with boundary}
\begin{array}{cl}
F(A_{\tilde g}):=f(\lambda(A_{\tilde g}))=1, &  \lambda(A_{\tilde g}) \in \Gamma \ {\rm on} \ \mathcal{M},\\
h_{\tilde g}=c,                              &  {\rm on}\ \partial \mathcal{M},
\end{array}
\end{equation}
where $h_{\tilde g}$ denotes the mean curvature of $\partial\mathcal{M}$ with respect to the inner normal. Writing $\tilde g = e^{-2u}g$ for some smooth function $u$ on $\mathcal{M}$, by the transformation laws for the Schouten tensor and mean curvature, the problem \eqref{Fully nonlinear Yamabe with boundary} is equivalent to the following semilinear Neumann boundary value problem
\begin{equation}\label{transformed Yamabe}
\begin{array}{cl}
\displaystyle f(\lambda_g(U))=e^{-2u}, &  \lambda_g(U) \in \Gamma \ {\rm on} \ \mathcal{M},\\
\displaystyle \frac{\partial u}{\partial \nu}=ce^{-u}-h_g,                   &  {\rm on}\ \partial \mathcal{M},
\end{array}
\end{equation}
with
$$U=\nabla^2 u + du\otimes du-\frac{1}{2}|\nabla u|^2g + A_g,$$
where $\lambda_g(U)$ denotes the eigenvalues of $U$ with respect to $g$, $\nu$ is the unit inner normal vector field to $\partial \mathcal{M}$, and $\nabla$ denotes the Levi-Civita connection with respect to $g$. If we choose $f=\det$ and $\Gamma=\Gamma_n:=\{\lambda=(\lambda_1,\cdots,\lambda_n)\in \mathbb{R}^n: \sum \lambda_i>0\}$, then we have an example \eqref{transformed Yamabe} of a semilinear Neumann boundary value problem \eqref{1.1}-\eqref{1.2} for a Monge-Amp\`ere type equation. In conclusion a prescribed mean curvature fully nonlinear Yamabe problem \eqref{Fully nonlinear Yamabe with boundary} is equivalent to a semilinear Neumann problem \eqref{transformed Yamabe} for an augmented Hessian equation.  The corresponding matrix functions in these cases will be strictly regular when expressed in terms of local coordinates so that in the Monge-Amp\`ere case strong local estimates are available, with second order estimates being considerably simpler than the general regular case we treat here. In fact, the particular Neumann boundary value problem \eqref{transformed Yamabe} with $f=\det$ has already been studied in \cite{JLL2007}. In the special case of Euclidean space $\mathbb{R}^n$, the matrix $A$ is given by
\begin{equation}\label{Euclidean Yamabe}
A=\frac{1}{2}|p|^2 I-p\otimes p,
\end{equation}
in which case our $A$-convexity condition \eqref{A convexity} reduces to simply $\kappa_1 + \varphi > 0, (\ge 0)$, 
where $\kappa_1$ denotes the minimum curvature of $\partial\Omega$.
The overall organisation of this paper follows that of the Dirichlet problem case \cite{JTY2013}, where again the main issue was to deal with the general case of regular $A$. Also here the strictly regular case is considerably simpler in the case of smooth data but in the optimal transportation case with only H\"{o}lder continuous densities local and global second derivative estimates were obtained in \cite{LTW2010, HJL2013}, in agreement with the uniformly elliptic case. In Section \ref{Section 2} we prove Theorem \ref{Th1.1}, which constitutes the heart of the paper. In Section \ref{Section 3} we provide the gradient estimate to complete the proof of Theorem \ref{Th1.2}, along with alternative solution bounds for more general oblique boundary value problems. In the optimal transportation case we also prove a Bakel'man type estimate for solutions which extends the Monge-Amp\`ere case in \cite{LTU1986}. In Section \ref{Section 4} we switch to the strictly regular case and prove first and second derivative bounds  for general oblique boundary value problems \eqref{1.8}, where $G$ is concave with respect to the $p$ variables, which extend the semilinear conditions \eqref{G in oblique form}. For this
purpose we extend our definition of $A$-convexity so that a $C^2$ domain $\Omega$ is  uniformly $A$-convex, ($A$-convex),
with respect to $G$ and an interval $\mathcal I$
if \eqref{A convexity} holds for all $(x,z,p)\in\partial\Omega\times \mathbb{R}\times\mathbb{R}^n$, satisfying
 $G(x,z,p) \ge 0$, $z\in\mathcal I$ and vectors $\tau$ tangent to $\partial\Omega$. When $G$ is independent of $z$, this corresponds to the $c$-convexity conditions from optimal transportation \cite {Tru2006,TruWang2009} and more  generally to the $Y$-convexity conditions for  prescribed Jacobian equations in \cite{Tru2008}. Finally we remark that
 a general theory of oblique boundary value problems for augmented Hessian equations, which embraces our results in Section 4, is presented in \cite{JT2015}.

\section{Second derivative estimates}\label{Section 2}
\vskip10pt

In this section, we shall derive the second order derivative estimates and complete the proof of Theorem \ref{Th1.1} by taking full advantage of the assumed $C^2$ supersolution $\bar u$. Note that we only need to get an upper bound for the second derivatives, since the lower bound can be derived from the ellipticity condition $D^2u-A>0$.

For the arguments below, we assume the functions
$\varphi$, $\nu$ can be smoothly extended to $\bar \Omega\times \mathbb{R}$ and $\bar \Omega$ respectively. We also assume that near the boundary, $\nu$ is extended to be constant in the normal directions. From the equation \eqref{1.1}, we have
\begin{equation}\label{3.3}
\tilde F[u]:=\log \det [D^2u-A(\cdot,u,Du)]=\tilde B(\cdot,u,Du),
\end{equation}
where $\tilde B \triangleq \log B$. We have $\frac{\partial \tilde F}{\partial
w_{ij}}=w^{ij}$, $\frac{\partial^2 \tilde F}{\partial
w_{ij}\partial w_{kl}}=-w^{ik}w^{jl}$, where
$\{w_{ij}\}\triangleq\{u_{ij}-A_{ij}\}$ denotes the augmented
Hessian matrix, and $\{w^{ij}\}$ denotes the inverse of the matrix
$\{w_{ij}\}$. We now introduce the following linearized operators of $\tilde F$ and \eqref{3.3},
\begin{equation}\label{3.4}
L\triangleq w^{ij}(D_{ij}-D_{p_l}A_{ij}(\cdot,u,Du)D_l), \quad \mathcal{L}v\triangleq Lv-D_{p_l}\tilde B(\cdot,u,Du)D_l.
\end{equation}
For convenience in later discussion, we denote $D_{\xi\eta}u\triangleq
D_{ij}u\xi_i\eta_j$, $w_{\xi\eta}\triangleq w_{ij}\xi_i\eta_j =
D_{ij}u\xi_i\eta_j-A_{ij}\xi_i\eta_j$ for any vectors $\xi$ and
$\eta$. As usual, $C$ denotes a constant depending on the known data
and may change from line to line in the context.

Before we start to deal with the second derivative estimates, we recall a fundamental lemma in \cite{JT2014, JTY2013}, which is also crucial to construct the global barrier function using the supersolution in our situation. We shall omit its proof, which is similar to those in \cite{JT2014, JTY2013}.

\begin{Lemma}\label{Lemma barrier}
Let $u\in C^2(\bar \Omega)$ be an elliptic solution of
\eqref{1.1}, $\tilde u \in C^2(\bar \Omega)$ be an elliptic
function of equation \eqref{1.1} in $\bar\Omega$ with $\tilde u \ge u$ in $\bar \Omega$, where $A$ is
regular and non-decreasing. Then
\begin{equation}\label{key inequality}
\mathcal L \left({e^{K(\tilde u-u)}}\right) \ge
\epsilon_1\sum\limits_{i} w^{ii}-C,
\end{equation}
holds in $\Omega$ for sufficiently large positive constant $K$ and
uniform positive constants $\epsilon_1, C$ depending on $A, B, \Omega, |u|_{1;\Omega}$ and $\tilde u$.
\end{Lemma}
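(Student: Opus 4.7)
The plan is to set $\eta := \tilde u - u \ge 0$, differentiate $e^{K\eta}$ directly, and exploit the positive second-order term $K^2 e^{K\eta} w^{ij}D_i\eta D_j\eta$ to dominate the errors coming from the fact that the regular condition \eqref{regular} only controls the $p$-Hessian of $A$ in directions perpendicular to $\xi$. A direct calculation gives
\[
\mathcal L(e^{K\eta}) = K e^{K\eta}\, \mathcal L\eta + K^2 e^{K\eta}\, w^{ij} D_i\eta\, D_j\eta,
\]
so it suffices to prove a lower bound of the form $\mathcal L\eta \ge \tfrac{\lambda_0}{2}\sum_i w^{ii} - C_0 - C_1\, w^{ij} D_i\eta\, D_j\eta$ and then take $K$ large enough that $K^2 \ge K C_1$.

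For the lower bound on $\mathcal L\eta$, I would first use $D_{ij}u = w_{ij} + A_{ij}(\cdot,u,Du)$ to rewrite
\[
L\eta = w^{ij}\bigl[D_{ij}\tilde u - A_{ij}(\cdot,u,Du) - D_{p_l}A_{ij}(\cdot,u,Du)\, D_l\eta\bigr] - n,
\]
and then Taylor expand $A_{ij}(\cdot,u,D\tilde u)$ in the $p$ variable around $Du$ to convert the bracket into
\[
D_{ij}\tilde u - A_{ij}(\cdot,u,D\tilde u) + \int_0^1 (1-t)\, A_{ij,kl}(\cdot,u,Du + tD\eta)\, D_k\eta\, D_l\eta\, dt.
\]
Passing to an orthonormal frame $\{\xi^{(k)}\}$ that diagonalizes $\{w^{ij}\}$ and decomposing $D\eta = (D\eta)^{(k)}_\perp + s_k\xi^{(k)}$ with $(D\eta)^{(k)}_\perp \perp \xi^{(k)}$, the integrand tested against $\xi^{(k)}\otimes\xi^{(k)}$ splits into a perpendicular contribution which is nonnegative by \eqref{regular} and a remainder bounded below by $-C(|s_k||D\eta| + s_k^2)$. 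Combined with the monotonicity of $A$ in $z$ (using $\tilde u \ge u$) to replace $A_{ij}(\cdot,u,D\tilde u)$ by $A_{ij}(\cdot,\tilde u,D\tilde u)$ upstairs, and the uniform ellipticity of $\tilde u$ on $\bar\Omega$ which yields $D_{\xi\xi}\tilde u - A_{ij}(\cdot,\tilde u,D\tilde u)\xi_i\xi_j \ge \lambda_0 > 0$ for any unit $\xi$, this gives
\[
L\eta \ge \lambda_0 \sum_k w^{kk} - C|D\eta|\sum_k w^{kk}|s_k| - C\sum_k w^{kk} s_k^2 - n.
\]

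The identity $\sum_k w^{kk} s_k^2 = w^{ij}D_i\eta D_j\eta$ and Cauchy--Schwarz on $\sum_k w^{kk}|s_k|$ allow me to absorb the cross term into $\tfrac{\lambda_0}{2}\sum_k w^{kk}$ at the cost of an extra multiple of $w^{ij}D_i\eta D_j\eta$, while the passage from $L$ to $\mathcal L$ costs only the uniformly bounded first-order term $-D_{p_l}\tilde B\, D_l\eta$. Inserting the resulting bound into the expansion of $\mathcal L(e^{K\eta})$ and taking $K$ sufficiently large then yields \eqref{key inequality} with $\epsilon_1 \sim K\lambda_0/2$. The main obstacle is precisely the control of the non-perpendicular error term: since \eqref{regular} is only assumed in weak form, there is no pointwise lower bound for the $s_k$-components of the Taylor remainder, and the argument succeeds only because the quadratic term $K^2 w^{ij}D_i\eta D_j\eta$ generated by the exponential is of exactly the right shape to absorb them.
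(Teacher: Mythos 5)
Your proposal is correct and follows essentially the same argument the paper sketches in its ``Remark on Lemma~\ref{Lemma barrier}'' at the end of Section~\ref{Section 2} (the simplified proof that avoids the perturbation of $\tilde u$ used in the references \cite{JT2014, JTY2013}): Taylor-expand $A$ in $p$ about $Du$, decompose $D(\tilde u-u)$ at each eigenvector $\xi^{(k)}$ of $w^{-1}$ into tangential and normal parts so that regularity handles the perpendicular contribution, use monotonicity in $z$ to pass from $A_{ij}(\cdot,u,D\tilde u)$ to $A_{ij}(\cdot,\tilde u,D\tilde u)$, exploit the uniform ellipticity of $M\tilde u$ on $\bar\Omega$ to get the positive $\lambda_0 w^{ii}$ term, and absorb the remaining quadratic error into the $K^2 e^{K\eta} w^{ij}D_i\eta\,D_j\eta$ term produced by the exponential. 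The only superficial difference is that you absorb the cross term into the fixed quantity $\tfrac{\lambda_0}{2}w^{ii}$ via Cauchy--Schwarz before choosing $K$, whereas the paper parametrizes the absorbed error by an arbitrary small $\eta>0$ and then takes $\eta<\lambda[M\tilde u]$ with $K=K(\eta)$; these bookkeeping choices are equivalent.
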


We assume that the domain $\Omega$ is uniformly $A$-convex, with respect to $\varphi$ and $u$, and first consider the second derivative estimates on the boundary $\partial \Omega$ in nontangential  directions.
We introduce the tangential gradient operator $\delta=(\delta_1,\cdots, \delta_n)$, where $\delta_i=(\delta_{ij}-\nu_i\nu_j)D_j$. Applying this tangential operator to the boundary condition \eqref{1.2}, we have
\begin{equation}\label{3.1}
D_ku\delta_i \nu_k + \nu_k \delta_i D_ku=\delta_i\varphi,\quad {\rm on} \  \partial \Omega,
\end{equation}
hence we have
\begin{equation}\label{2.3}
|D_{\tau\nu}u|\leq C,\quad {\rm on} \  \partial \Omega,
\end{equation}
for any tangential vector field $\tau$.

We next deduce the estimate for $D_{\nu\nu}u$ on $\partial \Omega$. By a direct calculation, we have
\begin{equation}\label{3.5}
\begin{array}{ll}
Lu \!\!\!& =w^{ij}(D_{ij}u-D_{p_l}A_{ij}(\cdot,u,Du)D_lu)\\
   \!\!\!& =n-w^{ij}(A_{ij}-D_{p_l}A_{ij}(\cdot,u,Du)D_lu).
\end{array}
\end{equation}
Differentiating the equation \eqref{3.3} with respect to $x_k$, we have, for $k=1, \cdots, n$,
\begin{equation}
    \begin{array}{ll}
         \!&\!\! w^{ij}(D_{ij}u_k-D_kA_{ij}-D_zA_{ij}u_k-D_{p_l}A_{ij}D_{l}u_k)\\
        =\!&\!\! D_{x_k}\tilde B+ D_z\tilde B u_k + D_{p_l}\tilde B D_lu_k,
    \end{array}
\end{equation}
which implies
\begin{equation}\label{3.7}
Lu_k=D_{x_k}\tilde B +D_z\tilde B u_k + D_{p_l}\tilde B D_{kl}u+w^{ij}(D_kA_{ij}+D_zA_{ij}u_k), \quad{\rm for}\  k=1,\cdots,n.
\end{equation}
If we consider the function $h=\nu_kD_ku-\varphi(x,u)$, by \eqref{3.5} and \eqref{3.7}, we immediately have
\begin{equation}\label{3.8}
|Lh|\le C(1+\sum w^{ii}+|D^2u|),\quad \ {\rm in} \ \Omega.
\end{equation}
From the positivity of $B$ we can estimate,
\begin{equation}\label{2.10}
1\le C w^{ii}, \quad (w_{ii})^\frac{1}{n-1}\le Cw^{ii}.
\end{equation}
 Thus we obtain from
\eqref{3.8} and the boundary condition \eqref{1.2},
\begin{equation}\label{2.11}
|Lh|\le C(1+|D^2u|^\frac{n-2}{n-1}) \sum w^{ii} \  \ {\rm in} \ \Omega, \quad {\rm and}\quad h=0 \ \ {\rm on}\ \partial \Omega.
\end{equation}

From the uniform $A$-convexity of $\Omega$ \eqref{A convexity wrt u} and the regularity of $A$ , there exists a defining function, $\phi\in C^2(\bar \Omega)$,
satisfying $\phi=0$ on $\partial \Omega$, $D\phi \neq 0$ on
$\partial \Omega$ and $\phi<0$ in $\Omega$, together with the
inequality
\begin{equation}\label{domain A convexity}
D_{ij}\phi - D_{p_k}A_{ij}(\cdot,u,Du)D_k\phi \ge \delta_1 I,
\end{equation}
in a neighbourhood $\mathcal{N}$ of $\partial \Omega$, whenever $D_\nu u \ge \varphi(x,u)$, where $\delta_1$ is a positive constant  and $I$
denotes the identity matrix, with $\mathcal{N}$ and $\delta_1$ depending also on $\delta_0$, $A$ and
$|u|_{1;\Omega}$.
We remark that \eqref{domain A convexity} follows from \eqref{A convexity wrt u},
using the continuity of $D_pA$ with respect to $x$ and $z$  together with an appropriate extension of the distance function, as in for example \cite{GTbook,TruWang2009}. In particular, we can take $\phi=-d+td^2$ near $\partial\Omega$, for a large enough positive constant $t$, where $d(x)={\rm{dist}} (x,\partial\Omega)$ is the distance function of $\Omega$. Accordingly
\begin{equation}\label{2.13}
L\phi \ge  \delta_1 \sum w^{ii},
\end{equation}
for  $h\ge 0$, $d<d_0$, for a positive constant $d_0$  also depending  on $\delta_0$, $A$ and $|u|_{1;\Omega}$.  By \eqref{2.11}, \eqref{2.13} and choosing $- \phi$ as a barrier function, a standard barrier argument leads to
$$D_\nu h\le C(1 + M_2^\frac{n-2}{n-1}), \quad {\rm on}\ \partial \Omega,$$
where $M_2 = \sup\limits_\Omega |D^2 u|$,
so that we have the estimate
\begin{equation}\label{3.10}
D_{\nu\nu} u\le C(1 + M_2)^\frac{n-2}{n-1}, \quad {\rm on}\ \partial \Omega.
\end{equation}
 We conclude from \eqref{2.3}, \eqref{3.10} and the ellipticity of $u$ that
\begin{equation}\label{2.15}
|D_{\nu\xi} u|\le  C(1 + M_2)^\frac{n-2}{n-1}, \quad {\rm on}\ \partial \Omega,
\end{equation}
for any direction $\xi$. We remark that if $B$ is independent of $p$ or $n=2$, then the term in $M_2$ is not present in \eqref{2.15}.

We have now established the mixed tangential normal derivative bound and the double normal derivative bound on $\partial \Omega$ so that it  remains to bound the double tangential second derivatives on $\partial \Omega$. We shall adapt
the  delicate method in \cite{LTU1986}, which is specific for the Neumann boundary value problem, to obtain the double tangential derivative bound on the boundary and consequently the global second derivative bound.

\begin{proof}[Proof of Theorem \ref{Th1.1}]
First we note from the comparison principle, Lemma 3.1, that $\bar u \ge u$ in $\Omega$ or $u-\bar u$ is a constant.
Discarding the second case, we  modify the elliptic supersolution $\bar u$ by adding a perturbation function $-a \phi$ , where $a$ is a small positive constant and $\phi$ is the defining function of the domain $\Omega$ satisfying $\phi=0$ on $\partial \Omega$, $\phi <0$ in $\Omega$ and $D_\nu \phi =-1$ on $\partial \Omega$. Note that the new function $\tilde u = \bar u - a \phi$ is still uniformly elliptic in $\Omega$ if $a$ is sufficiently small. Also, by a direct computation, we have
\begin{equation}
\begin{array}{rl}
D_\nu (\tilde u - u) = \!\!&\!\! D_{\nu}\bar u - D_\nu u - aD_\nu \phi\\
                     = \!\!&\!\! \varphi(\cdot,\bar u) - \varphi(\cdot,u) + a\\
                   \ge \!\!&\!\! a,
\end{array}
\end{equation}
on $\partial \Omega$, where the non-decreasing of $\varphi$ and $\bar u \ge u$ on $\partial \Omega$ are used. If we define a function with the form $\Phi= e^{K(\tilde u-u)}$ with a positive constant $K$, we then have $D_\nu \Phi \ge Ka > 0$ on $\partial \Omega$.
We now introduce an auxiliary function $v$, given by
\begin{equation}\label{aux fn}
v=v(\cdot, \xi)= e^{\frac{\alpha}{2} |Du|^2+ \kappa \Phi}(w_{\xi\xi}-v'(\cdot, \xi)),
\end{equation}
for $x\in \bar \Omega$, $|\xi| = 1$, where $\alpha$, $\kappa$ are positive constants to be determined,
$$\Phi=\frac{1}{\epsilon_1}e^{K(\tilde u-u)}$$
is the barrier function in Lemma \ref{Lemma barrier} with the above constructed $\tilde u$, and $v'$ is  defined by
\begin{equation}
v'(\cdot, \xi)= 2(\xi\cdot \nu)\xi'_i(D_i \varphi(\cdot,u) - D_kuD_i\nu_k - A_{ij}\nu_j),
\end{equation}
with $\xi'=\xi-(\xi\cdot \nu)\nu$. Here $\nu$ is a $C^{2,1}(\bar \Omega)$ extension of the inner unit normal vector field on $\partial \Omega$. The strategy of our proof is to estimate $v$ at a maximum point in $\bar\Omega$ and vector
$\xi$, in the same form as \eqref{2.15}. from this we conclude a corresponding global estimate for $D^2 u$ in
$\Omega$ from which follows the desired estimate \eqref{global C2 bound}.

\textbf{Case 1.} We suppose that $v$ takes its maximum at an interior point $x_0\in \Omega$ and a vector $\xi$. Let
\begin{equation}
H = \log v =\log (w_{\xi\xi}-v') + \frac{\alpha}{2} |Du|^2 + \kappa \Phi,
\end{equation}
then the function $H$ also attains its maximum at the point $x_0\in \Omega$ and the unit vector $\xi$.
The following analysis follows the method of Pogorelov type estimates in \cite{TruWang2009}, with some modification, adapted from  \cite{LTU1986}, to handle the additional term $v'$. Accordingly  we have, at the point $x_0$,
\begin{equation}\label{Maximum Point}
\begin{array}{rl}
0 =  D_i H   = \!\!&\!\! \displaystyle \frac{D_i(w_{\xi\xi}-v')}{w_{\xi\xi}-v'} + \alpha D_ku D_{ik}u + \kappa D_i \Phi, \quad\quad\quad {\rm for}\  i=1\cdots n,\\
0\ge D_{ij}H = \!\!&\!\! \displaystyle \frac{D_{ij}(w_{\xi\xi}-v')}{w_{\xi\xi}-v'} - \frac{D_i(w_{\xi\xi}-v')D_j(w_{\xi\xi}-v')}{(w_{\xi\xi}-v')^2}\\
               \!\!&\!\! \displaystyle + \alpha (D_{ik}uD_{jk}u + D_ku D_{ijk}u) + \kappa D_{ij}\Phi,
\end{array}
\end{equation}
and consequently, at $x_0$
\begin{equation}\label{LH le 0}
\begin{array}{rl}
0 \ge \mathcal LH = \!\!&\!\! \displaystyle \frac{1}{w_{\xi\xi}-v'}L(w_{\xi\xi}-v') - \frac{1}{(w_{\xi\xi}-v')^2}w^{ij}{D_i(w_{\xi\xi}-v')D_j(w_{\xi\xi}-v')}\\
           \!\!&\!\! \displaystyle + \alpha w^{ij}D_{ik}uD_{jk}u + \alpha D_ku\mathcal Lu_k + \kappa L \Phi.
\end{array}
\end{equation}
Next, we shall estimate each term on the right hand side of \eqref{LH le 0}. We start with some identities. By differentiation of the equation \eqref{3.3} in the direction $\xi$, we have in accordance with \eqref{3.7},
\begin{equation}\label{once differentiated}
    \begin{array}{ll}
         \!&\!\! w^{ij}(D_{ij}u_\xi-D_\xi A_{ij}-D_zA_{ij}u_\xi-D_{p_l}A_{ij}D_{l}u_\xi)\\
        =\!&\!\! D_\xi\tilde B+ D_z\tilde B u_\xi + D_{p_l}\tilde B D_lu_\xi,
    \end{array}
\end{equation}
and a further differentiation in the direction of $\xi$ yields,
\begin{equation}\label{twice differentiated}
\begin{array}{ll}
 \!&\!\!\displaystyle w^{ij}[D_{ij}u_{\xi\xi}-D_{\xi\xi}A_{ij}-(D_{zz}A_{ij})(u_\xi)^2-(D_{p_kp_l}A_{ij})D_ku_\xi D_lu_\xi\\
 \!&\!\!\displaystyle \quad -(D_zA_{ij})u_{\xi\xi}-(D_{p_k}A_{ij})D_ku_{\xi\xi}-2(D_{\xi z}A_{ij})u_\xi\\
 \!&\!\!\displaystyle \quad -2(D_{\xi p_k}A_{ij})D_ku_\xi - 2(D_{zp_k}A_{ij})(D_ku_\xi)u_\xi ]\\
=\!&\!\!\displaystyle w^{ik}w^{jl}D_{\xi}w_{ij}D_{\xi}w_{kl} + D_{\xi\xi}\tilde B + (D_{zz}\tilde B)(u_\xi)^2 +
(D_{p_kp_l}B)D_ku_\xi D_lu_\xi\\
 \!&\!\!\displaystyle \quad + 2(D_{\xi z}\tilde B)u_\xi + 2(D_{\xi p_k}\tilde B)D_ku_\xi + 2(D_{zp_k}\tilde B)(D_ku_\xi)u_\xi\\
 \!&\!\!\displaystyle \quad + (D_z \tilde B)u_{\xi\xi} + (D_{p_k}\tilde B)D_k u_{\xi\xi}.
\end{array}
\end{equation}
Using \eqref{twice differentiated} and the regular condition \eqref{regular}, (see (3.9) in \cite{TruWang2009}), we have
\begin{equation}\label{Luxixi}
\mathcal Lu_{\xi\xi} \ge w^{ik}w^{jl}D_{\xi}w_{ij}D_{\xi}w_{kl} - C[(1+w_{ii})\mathcal T + (w_{ii})^2],
\end{equation}
where we denote $\mathcal T = w^{ii}$ to avoid any confusion with the usual summation convention. When calculating $LA_{\xi\xi}$, there will occur third derivative terms of $u$, which are controlled using  \eqref{once differentiated}. We then obtain
\begin{equation}\label{LAxixi}
|\mathcal LA_{\xi\xi}| \le C[(1+w_{ii})\mathcal T+w_{ii}]
\end{equation}
and by a similar calculation, we have
\begin{equation}\label{Lv'}
|\mathcal Lv'| \le C[(1+w_{ii})\mathcal T+w_{ii}].
\end{equation}
Combining \eqref{Luxixi}, \eqref{LAxixi} and \eqref{Lv'}, we have
\begin{equation}\label{Lwxixiv'}
\mathcal L(w_{\xi\xi}-v') \ge w^{ik}w^{jl}D_{\xi}w_{ij}D_{\xi}w_{kl} - C[(1+w_{ii})\mathcal T + (w_{ii})^2].
\end{equation}
By  Cauchy's inequality, we have
\begin{equation}\label{Cauchy's inequality}
w^{ij}{D_i(w_{\xi\xi}-v')D_j(w_{\xi\xi}-v')} \le (1+\theta)w^{ij}D_iw_{\xi\xi}D_jw_{\xi\xi} + C(\theta)w^{ij}D_iv'D_jv'
\end{equation}
for any $\theta>0$, where $C(\theta)$ is a positive constant depending on $\theta$.

By \eqref{key inequality}, \eqref{3.7}, \eqref{Lwxixiv'} and \eqref{Cauchy's inequality}, we then obtain from \eqref{LH le 0}
\begin{equation}\label{LH le 0'}
\begin{array}{rl}
0 \ge \!\!&\!\! \displaystyle \frac{1}{w_{\xi\xi}-v'}w^{ik}w^{jl}D_{\xi}w_{ij}D_{\xi}w_{kl} - \frac{1+\theta}{(w_{\xi\xi}-v')^2}w^{ij}{D_iw_{\xi\xi}D_jw_{\xi\xi}}\\
      \!\!&\!\! \displaystyle + \alpha w_{ii} + \kappa \mathcal T - C\{\frac{1}{w_{\xi\xi}-v'}[(1+w_{ii})\mathcal T + (w_{ii})^2]+\alpha+\kappa \}\\
      \!\!&\!\! \displaystyle  - \frac{C(\theta)}{(w_{\xi\xi}-v')^2}w^{ij}D_iv'D_jv'.
\end{array}
\end{equation}

Without loss of generality, we assume that $\{w_{ij}\}$ is diagonal at $x_0$ with maximum eigenvalue $w_{11}$. We can always assume that $w_{11}>1$ and is as large as we want; otherwise we are done. We proceed first to estimate the third derivative terms in \eqref{LH le 0'}. From the inequality (3.48) in \cite{LTU1986}, we have
\begin{equation} \label{2.30}
w^{ik}w^{jl}D_\xi w_{ij}D_{\xi}w_{kl}-\frac{1}{w_{11}}w^{ij}{D_iw_{\xi\xi}D_jw_{\xi\xi}} \ge 0.
\end{equation}
Moreover since $v^\prime$ is bounded, $w_{11}$ and $w_{\xi\xi}$ are comparable in the sense that for any
 $\theta > 0$, there exists a further constant $C(\theta)$ such that
 \begin{equation}\label{2.31}
|w_{11} - w_{\xi\xi} + v^\prime| < \theta w_{11},
 \end{equation}
 if $w_{11} > C(\theta)$.  From  \eqref{2.30} and \eqref{2.31}, we have
 \begin{equation}\label{2.32}
w^{ik}w^{jl}D_{\xi}w_{ij}D_{\xi}w_{kl} \ge \frac{1-\theta}{w_{\xi\xi}-v'}w^{ij}{D_iw_{\xi\xi}D_jw_{\xi\xi}}.
 \end{equation}
 Next we use  $D_iH=0$ in \eqref{Maximum Point}, to estimate
 \begin{equation}\label{2.33}
 \begin{array}{rl}
w^{ij}D_iw_{\xi\xi}D_jw_{\xi\xi} \le \!\!&\!\!   2w^{ii}[|D_iv^\prime|^2 +  (w_{\xi\xi}-v')^2 (\alpha D_kuD_{ik}u + \kappa D_i\Phi)^2] \\
                                 \le \!\!&\!\!   C[w^{ii} +  (w_{\xi\xi}-v')^2 (\alpha^2 w_{ii} + \kappa^2 \mathcal T)].
 \end{array}
 \end{equation}
Using \eqref{2.32} and \eqref{2.33} in \eqref{LH le 0'}, together with \eqref{2.31}, we then obtain for $w_{11} \ge C(\theta)$,
\begin{equation}\label{2.34}
  \alpha w_{ii} + \kappa \mathcal T \le C[ \alpha +\kappa + (1+\alpha^2\theta)w_{ii} + (1+\kappa^2 \theta) \mathcal T].
\end{equation}
  By choosing  $\alpha$, $\kappa$ large, and then fixing a small positive $\theta$, we thus obtain an estimate  $w_{ii}(x_0) \le C$, which implies a corresponding estimate for $|D^2u(x_0)|$.

\vspace{2mm}

\textbf{Case 2.} We consider the case $x_0\in \partial\Omega$, namely the function $v(x, \xi)= e^{\frac{\alpha}{2} |Du|^2+ \kappa \Phi}(w_{\xi\xi}-v')$ attains its maximum over $\bar \Omega$ at $x_0 \in \partial\Omega$ and a unit vector $\xi$. We then consider the following three subcases of different directions of $\xi$. For this we employ the key trick from \cite{LTU1986}.

\vspace{2mm}

{\it Subcase (i).} $\xi=\nu$, where $\nu$ is normal to $\partial\Omega$ at $x_0$. Since from \eqref{3.10} we already obtained the double normal derivative bound, we have
\begin{equation}\label{2.35}
v(x_0,\nu) \le C(1 + M_2)^\frac{n-2}{n-1}, \quad {\rm on}\ \partial \Omega.
\end{equation}

\vspace{2mm}

{\it Subcase (ii).} $\xi$ is neither normal nor tangential to $\partial\Omega$. The unit vector $\xi$ can be written as
\begin{equation}
\xi = (\xi\cdot\tau)\tau + (\xi\cdot\nu)\nu,
\end{equation}
where $\tau\in S^{n-1}$, with $\tau\cdot \nu=0$, $(\xi\cdot\tau)^2+(\xi\cdot\nu)^2=1$ and $\xi\cdot \nu\neq 0$. By the construction of $v'$, we have at $x_0$,
\begin{equation}
\begin{array}{rl}
w_{\xi\xi}= \!\!&\!\! (\xi\cdot\tau)^2w_{\tau\tau} + (\xi\cdot\nu)^2 w_{\nu\nu} + 2(\xi\cdot\tau)(\xi\cdot\nu)w_{\tau\nu}\\
          = \!\!&\!\! (\xi\cdot\tau)^2w_{\tau\tau} + (\xi\cdot\nu)^2 w_{\nu\nu} + v'(x,\xi).
\end{array}
\end{equation}
By the constructions of $v$, we then have
\begin{equation}
\begin{array}{rl}
v(x_0,\xi) = \!\!&\!\! (\xi\cdot\tau)^2v(x_0,\tau)+(\xi\cdot\nu)^2v(x_0,\nu)\\
           \le \!\!&\!\! (\xi\cdot\tau)^2v(x_0,\xi)+(\xi\cdot\nu)^2v(x_0,\nu),
\end{array}
\end{equation}
which leads again to
\begin{equation}
v(x_0,\xi) \le v(x_0,\nu)  \le C(1 + M_2)^\frac{n-2}{n-1}, \quad {\rm on}\ \partial \Omega.
\end{equation}

\vspace{2mm}

{\it Subcase (iii).} $\xi$ is tangential to $\partial \Omega$ at $x_0$. Observing the construction of $v'$, we have $v'(x_0,\xi)=0$. We then have, at $x_0$,
\begin{equation}
\begin{array}{rl}
0 \ge \!\!&\!\! \displaystyle D_\nu v = D_\nu [e^{\frac{\alpha}{2} |Du|^2 + \kappa \Phi}(w_{\xi\xi}-v')] \\
   =  \!\!&\!\! \displaystyle e^{\frac{\alpha}{2} |Du|^2 + \kappa \Phi}[(w_{\xi\xi}-v')D_\nu (\frac{\alpha}{2} |Du|^2 + \kappa \Phi) + D_\nu (w_{\xi\xi}-v')] \\
   =  \!\!&\!\! \displaystyle e^{\frac{\alpha}{2} |Du|^2 + \kappa \Phi}\{[\alpha D_kuD_\nu(D_ku)+\kappa D_\nu \Phi]w_{\xi\xi}+D_\nu u_{\xi\xi}-D_\nu (A_{\xi\xi}+ v') \}\\
   =  \!\!&\!\! \displaystyle e^{\frac{\alpha}{2} |Du|^2 + \kappa \Phi}\{[\kappa D_\nu \Phi + \alpha D_ku(\varphi_k+\varphi_z D_ku - D_iuD_k\nu_i)]w_{\xi\xi}+D_\nu u_{\xi\xi}-D_\nu (A_{\xi\xi}+ v') \}\\
   \ge \!\!&\!\! \displaystyle e^{\frac{\alpha}{2} |Du|^2 + \kappa \Phi}\{(\kappa c_0 - \alpha M)w_{\xi\xi}+D_\nu u_{\xi\xi}-D_\nu (A_{\xi\xi}+ v') \},
\end{array}
\end{equation}
where $c_0=\frac{Ka}{\epsilon_1}$ , $M= \max\limits_{x\in\partial\Omega}|D_ku(\varphi_k+\varphi_z D_ku - D_iuD_k\nu_i)|$. The above inequality gives a relationship between $w_{\xi\xi}(x_0)$ and $D_\nu u_{\xi\xi}(x_0)$, namely
\begin{equation}\label{combine*}
D_\nu u_{\xi\xi} \le -(\kappa c_0 - \alpha M)w_{\xi\xi} +D_\nu (A_{\xi\xi} +v'), \quad {\rm at} \ x_0.
\end{equation}
On the other hand, by tangentially differentiating the boundary condition twice, we obtain
\begin{equation}
D_ku \delta_i\delta_j \nu_k + \delta_iD_ku\delta_j\nu_k + \delta_jD_ku\delta_i\nu_k + \nu_k \delta_i\delta_j D_k u = \delta_i \delta_j \varphi, \quad
{\rm on}\ \partial \Omega.
\end{equation}
Hence at $x_0$, for the tangential direction $\xi$ we have
\begin{equation}\label{combine**}
\begin{array}{rl}
D_\nu u_{\xi\xi} \ge \!\!&\!\! \varphi_z D_{ij}u \xi_i\xi_j - 2 (\delta_i\nu_k)D_{jk}u\xi_i\xi_j + (\delta_i\nu_j)\xi_i\xi_jD_{\nu\nu}u -C\\
                 \ge \!\!&\!\! \varphi_z D_{ij}u \xi_i\xi_j - 2 (\delta_i\nu_k)D_{jk}u\xi_i\xi_j -C\\
                 \ge \!\!&\!\! \varphi_z w_{\xi\xi} - 2 (\delta_i\nu_k)D_{jk}u\xi_i\xi_j -C, \quad {\rm at}\ x_0,
\end{array}
\end{equation}
where the double normal boundary estimate \eqref{3.10} is used in the second inequality. The inequality \eqref{combine**} clearly provides another relationship between $D_\nu u_{\xi\xi}(x_0)$ and $w_{\xi\xi}(x_0)$. Combining this with \eqref{combine*}, we obtain
\begin{equation}\label{combined}
(\kappa c_0 - \alpha M + \varphi_z)w_{\xi\xi} \le 2 (\delta_i\nu_k)D_{jk}u\xi_i\xi_j + D_\nu (A_{\xi\xi} + v') + C, \quad {\rm at}\ x_0.
\end{equation}
Without loss of generality, we can assume the normal at $x_0$ to be $\nu=(0,\cdots,0,1)$, and correspondingly we may assume $\{w_{ij}(x_0)\}_{i,j<n}$ is diagonal with maximum eigenvalue $w_{11}(x_0) > 1$, as in the interior case.
Observing that the first term on the right hand side of  \eqref{combined} only involves tangential second derivatives and using \eqref {2.15}, we can then estimate at $x_0$,
\begin{equation}
\begin{array}{rl}(\kappa c_0 - \alpha M + \varphi_z )w_{\xi\xi} \le \!\!&\!\! C (w_{11} + |DD_\nu u| )\\
                               \le  \!\!&\!\! Cw_{\xi\xi}  + \epsilon M_2 + C_\epsilon.
\end{array}
\end{equation}
We now choose $\kappa$ sufficiently large, such that
\begin{equation}
\kappa\ge \frac{2}{c_0}[\alpha M - \inf \varphi_z - C],
\end{equation}
and again we obtain
\begin{equation}\label {2.47}
v(x_0,\xi) \le C(1 + M_2)^\frac{n-2}{n-1}.
\end{equation}
We now conclude from the above three subcases that if $v$ attains its maximum over $\bar \Omega$ at a point $x_0\in\partial\Omega$, then $v(x_0,\xi)$ is bounded from above as in \eqref{2.47}, which implies the second derivative $D_{\xi\xi}u(x_0)$ is also similarly bounded from above. Combining the above two cases, and using the Cauchy inequality, we obtain the desired estimate \eqref{global C2 bound}  and complete the proof of Theorem \ref{Th1.1}.
\end{proof}

As remarked in Section \ref{Section 1}, we can relax the supersolution hypothesis when $D_{px} A = 0$, that is $A$ is of the form
\eqref{special case}. Moreover the details are then much simpler as we do not need to extend the Pogorelev argument to handle third derivatives. Here we proceed in accordance with Remark 1 in Section 3 of \cite{LTU1986}, assuming as there initially that $B$ is convex with respect to $p$, and replace the auxiliary function $v$ in \eqref{aux fn} by
\begin{equation}
v=v(x, \xi)= w_{\xi\xi}- v' +\frac{\alpha}{2} |Du|^2+ \kappa \Phi,
\end{equation}
where now $\tilde u \in C^2(\bar\Omega)$ in $\Phi=\frac{1}{\epsilon_1}e^{K(\tilde u-u)}$ is an elliptic function with $\tilde u \ge u$ in $\Omega$, as in Lemma \ref{Lemma barrier}.
 In place of \eqref{Lwxixiv'}, we now have the simpler inequality
\begin{equation}
L(w_{\xi\xi}-v') \ge  - C(1+ \mathcal{T} +w_{ii}) .
\end{equation}
And we obtain an estimate from above for $w_{\xi\xi}$ if the maximum of $v$ occurs at an interior
 point of $\Omega$  by taking again sufficiently large constants $\alpha$
 and $\kappa$. If the maximum of $v$ occurs on the boundary $\partial\Omega$, then we proceed as in Case 2 above except now the technical details are simpler and we do not need $D_\nu \Phi \ge 0$ on $\partial\Omega$ but we do need instead $\Omega$ uniformly convex or more generally $\varphi_z  + 2\kappa_1 > 0$, where $\kappa_1$ is the minimum curvature of $\partial\Omega$, to use \eqref{combine**}. We then obtain the estimate \eqref {global C2 bound} as before except that the dependence on $\bar u$ is replaced by a dependence on an elliptic function $\tilde u$. The removal of the condition that $B$ is convex in $p$ can then be addressed in the same way as in \cite{LTU1986} by using Theorem \ref{Th1.2} to construct a supersolution when $B$ is replaced by its infimum and invoking the full strength of Theorem \ref{Th1.1}.

 \subsection*{Remark on Lemma \ref{Lemma barrier}}

 The proof of Lemma \ref{Lemma barrier} following \cite{JT2014, JTY2013} applies very generally. In fact, similarly to Theorem 2.1 in
 \cite{Tru2006}, we may replace the function
 ``$\log\det$'' in \eqref {3.3} by any increasing concave $C^1$ function $f$ on an open convex set $\Gamma$ in the linear space of  $n\times n$ symmetric matrices $\mathbb{S}^n$, which is closed under addition of the positive cone. Here the ellipticity conditions are replaced by the augmented Hessians $Mu(\Omega), M\tilde u(\bar\Omega)\subset\Gamma$, which imply the operator $\tilde F$ is elliptic with respect to $u$ and $\tilde u$ on $\Omega$ and $\bar\Omega$, respectively, and $w^{ij}$ is replaced by $\tilde F_{r_{ij}}$ in the definition of $L$. The general case is covered with a slightly different proof in Section 4 of the forthcoming paper \cite {JT2015}; see also \cite{JTY2014} for the $k$-Hessian case. However for the special case of \eqref{3.3}, the proof of Lemma \ref{Lemma barrier} from \cite{JT2014, JTY2013} may also be simplified somewhat by avoiding the perturbation of $\tilde u$ that is one of the key ingredients of the general argument used there. To see this, we may modify the calculations in the proof of Lemma 2.2 in \cite{JT2014}, with $\epsilon = 0$ and $v=\tilde u - u$, (without using concavity!), to  arrive at the inequality,
 \begin{equation}
 Le^{Kv} \ge Ke^{Kv}\{ w^{ij}[D_{ij}\tilde u - A_{ij}(\cdot,\tilde u,D\tilde u) - w_{ij}] - \eta w^{ii} - D_{p_l}\tilde B(\cdot,u,Du)D_lv\},
 \end{equation}
for any positive constant  $\eta$ and sufficiently large constant $K$ depending also on $\eta$. We then obtain \eqref{key inequality} using the simple inequality
 $$ w^{ij} [D_{ij}\tilde u - A_{ij}(\cdot,\tilde u,D\tilde u)] \ge w^{ii} \lambda[M\tilde u] > 0,$$
 where $\lambda[M\tilde u]$ denotes the minimum eigenvalue of $M\tilde u$, and taking $\eta$ sufficiently small.

\section{Existence and solution estimates}\label{Section 3}
\vskip10pt

In this section we complete the proof of Theorem \ref{Th1.2} and provide alternative conditions for the maximum modulus for solutions of the Neumann problem \eqref{1.1}-\eqref{1.2}. First we formulate a comparison principle for general oblique boundary value problems \eqref{1.7}-\eqref{1.8} with $F$ defined by \eqref{1.9}, with $A$ and $B$ non-decreasing in $z$, and $G\in C^1(\partial \Omega\times\mathbb{R} \times \mathbb{R}^n$,  non-increasing in $z$.
\begin{Lemma}\label{comparison}
Let $u,v\in C^2(\Omega)\cap C^1(\bar \Omega)$ with $\mathcal F$ elliptic, with respect to $u$, in $\Omega$
and $\mathcal G$ oblique with respect to $[u,v]$ on $\partial\Omega$, where  $[u,v] = \{ \theta u + (1-\theta) v: 0 \le\theta\le 1\}$. Assume also that either $G$ is strictly decreasing in $z$ or $A$ or $B$ are strictly increasing in $z$. Then if $\mathcal F[u] \ge \mathcal F[v]$ on the subset of $\Omega$ where $\mathcal F$ is elliptic with respect to $v$ and
$\mathcal G[u] \ge \mathcal G[v]$  on $\partial\Omega$, we have
\begin{equation}\label{u le v}
u \le v, \quad \mbox{in } \ \ \Omega.
\end{equation}
Moreover  if we assume that $\mathcal F$ is elliptic with respect to $[u,v]$ on all of  $\Omega$ , we may relax the strict monotonicity condition on $A,B$ or $G$, provided $u-v$ is not a constant.
\end{Lemma}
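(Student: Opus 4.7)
The plan is to argue by contradiction: assume $M:=\sup_{\bar\Omega}(u-v)>0$ and examine a maximum point of $w:=u-v$ on $\bar\Omega$, along the lines of a standard Monge-Amp\`ere comparison argument, linearising the equation along the segment $u_\theta=v+\theta w$ and invoking strong maximum principle / Hopf lemma machinery. First I set up a linearised operator on $\Omega_+:=\{x\in\Omega:Mv(x)>0\}$, the set on which the hypothesis $\mathcal F[u]\ge\mathcal F[v]$ applies. Since the positive cone in the space of symmetric matrices is convex and both $Mu$ and $Mv$ lie in it on $\Omega_+$, a cofactor mean-value calculation yields
\[
\mathcal F[u]-\mathcal F[v] = a^{ij}D_{ij}w+b^kD_kw+cw =: \mathcal Lw \quad\mbox{in } \Omega_+,
\]
where $a^{ij}$ is positive definite (an average of cofactors along the segment) and $c\le 0$ because $A,B$ are non-decreasing in $z$. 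A parallel decomposition on $\partial\Omega$ gives $\mathcal G[u]-\mathcal G[v]=\beta\cdot Dw+\gamma w$ with $\beta\cdot\nu>0$ (obliqueness along $[u,v]$) and $\gamma\le 0$, so $\mathcal Lw\ge 0$ in $\Omega_+$ and $\beta\cdot Dw+\gamma w\ge 0$ on $\partial\Omega$.

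If $M$ is attained at an interior point $x_1$, then $Dw(x_1)=0$, $D^2w(x_1)\le 0$, and the identity
\[
Mv(x_1)-Mu(x_1)=-D^2w(x_1)+A(x_1,u,Du)-A(x_1,v,Du)
\]
together with $A$ non-decreasing in $z$ gives $Mv(x_1)\ge Mu(x_1)>0$, so $x_1\in\Omega_+$ and the equation applies. Combined with $\det Mu\le\det Mv$ on the positive cone and $B(x_1,u,Du)\ge B(x_1,v,Dv)$, the hypothesis $\mathcal F[u]\ge\mathcal F[v]$ forces equality in both, which is immediately contradicted when $A$ or $B$ is strictly increasing in $z$ (by strict Loewner-monotonicity of $\det$, respectively strict monotonicity of $B$). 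When only $G$ is strictly decreasing, I propagate instead via the strong maximum principle for $\mathcal L$ (with $c\le 0$): $w\equiv M$ on the connected component $\Omega^*$ of $\Omega_+$ containing $x_1$, and any limit point of $\Omega^*$ inside $\Omega$ would itself be an interior maximum and therefore lie in $\Omega_+$ by the same computation; hence $\Omega^*=\Omega$ by connectedness, $w\equiv M$ on $\bar\Omega$, and $\mathcal G[u]-\mathcal G[v]=G(\cdot,v+M,Dv)-G(\cdot,v,Dv)<0$ delivers the final contradiction.

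If instead the maximum is attained only at $x_0\in\partial\Omega$ with $w<M$ in $\Omega$, then $D_\tau w(x_0)=0$ tangentially and $D_\nu w(x_0)\le 0$, so the boundary inequality gives
\[
0\le \beta_\nu D_\nu w(x_0)+\gamma(x_0) M \le 0,
\]
whence $D_\nu w(x_0)=0$ and $\gamma(x_0) M=0$. The strict case of $G$ again contradicts this immediately; in the case that $A$ or $B$ is strict, I plan to invoke Hopf's lemma. From $Dw(x_0)=0$ and $w\le M$ on the inward half-space, a Taylor argument shows $D^2w(x_0)\le 0$ as a full matrix; the interior computation then yields $Mv(x_0)\ge Mu(x_0)>0$, so by continuity a ball tangent to $\partial\Omega$ at $x_0$ sits inside $\Omega_+$, and applying Hopf's lemma on this ball to $\mathcal Lw\ge 0$ produces $D_\nu w(x_0)<0$, contradicting $D_\nu w(x_0)=0$.

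The principal technical obstacle I anticipate is precisely this last Hopf step: one must guarantee that $v$ is genuinely $A$-admissible in a tangential neighbourhood of $x_0$, and cope with the fact that $u,v$ are only $C^2(\Omega)\cap C^1(\bar\Omega)$, so the Taylor analysis at $x_0$ should be carried out along interior normal rays and closed off by a limit argument. The second part of the lemma, in which $\mathcal F$ is assumed elliptic along the entire segment $[u,v]$ in $\Omega$, is handled by the same scheme but with $\mathcal L$ now defined globally on $\Omega$: the hypothesis that $u-v$ is not constant precludes the $w\equiv M$ configuration, leaving only the interior strong maximum principle and the boundary Hopf step, both of which go through without any strict monotonicity.
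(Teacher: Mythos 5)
Your overall strategy --- examine a positive maximum of $w=u-v$, linearise both $\mathcal F$ and $\mathcal G$ along a segment, and close with the strong maximum principle and Hopf --- is the same circle of ideas as the paper's. The linearisation itself is sound: by decomposing $\det Mu-\det Mv$ via the cofactor mean value along the matrix segment $\theta Mu+(1-\theta)Mv$ (which stays in the positive cone by convexity wherever $Mu,Mv>0$) and separately averaging $A$ and $B$ in $(z,p)$, one indeed obtains $\mathcal Lw\ge0$ on $\Omega_+=\{Mv>0\}$ with positive-definite leading part and $c\le0$; your interior analysis (including the propagation of $\{w=M\}$ via the strong maximum principle when only $G$ is strict, and the immediate contradiction from strict Loewner monotonicity of $\det$ or strict monotonicity of $B$ otherwise) is correct.

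The genuine gap is exactly where you flag it: the boundary Hopf step in the strict-$A$/$B$ case. The hypothesis $\mathcal F[u]\ge\mathcal F[v]$ is only supplied on $\{Mv>0\}$, and nothing guarantees that this set contains an interior ball tangent to $\partial\Omega$ at the maximum point $x_0$. Your proposed route --- infer $D^2w(x_0)\le0$ by a Taylor expansion along inward rays and thence $Mv(x_0)\ge Mu(x_0)>0$ by continuity --- does not close, because $u,v\in C^2(\Omega)\cap C^1(\bar\Omega)$ only, so $Mv$ is not defined at $x_0$, and $D^2v$ may oscillate or blow up as the boundary is approached, so no limit argument along a normal ray produces the needed tangent ball inside $\Omega_+$. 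The paper sidesteps this entirely by a different routing: rather than argue at a maximum of $u-v$ directly, it perturbs $u$ downward to $\underline u=u-\epsilon(\phi-\min\phi)$ using a defining function $\phi$, which by obliqueness forces $\mathcal G[\underline u]>\mathcal G[v]$ on $\partial\Omega$ and hence excludes a boundary maximum outright; a positive interior maximum of $\underline u-v$ then yields the contradiction from strict monotonicity of $F$ alone, with no Hopf lemma required. Conversely, when only $G$ is strict, the paper shows the maximum can only occur on $\partial\Omega$ and uses the strict $z$-monotonicity of $G$ there, again without Hopf. The strong maximum principle and Hopf lemma appear in the paper only in the ``moreover'' part, where ellipticity along the whole segment $[u,v]$ in $\Omega$ is an explicit hypothesis. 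So the fix for your proof is to incorporate the paper's perturbation of $u$ in the strict-$A$/$B$ case rather than to try to justify Hopf on an uncontrolled domain $\Omega_+$.
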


The proof of  Lemma \ref{comparison} is standard. By approximating $\Omega$ by a subdomain and approximating
 $u$ by a smaller elliptic function $\underline u$ satisfying $\mathcal F[\underline  u]> \mathcal F[u]$, we infer that the function $u - v$ can only take a positive maximum on the boundary $\partial\Omega$ and $\eqref{u le v}$ then follows from the obliqueness and the strict monotonicity of $G$. When $G$ is only non-increasing in $z$, then we can take
 $\underline u = u -\epsilon( \phi - \min \phi)$ for a defining function $\phi \in C^2(\Omega)\cap C^1(\bar \Omega)$
 such that $\phi = 0$ on  $\partial\Omega$, $\phi < 0$ in $\Omega$ and sufficiently small $\epsilon >0$, to ensure
 $\mathcal G[\underline u] > \mathcal G[u]$ on $\partial\Omega$, whence   a positive maximum of $\underline u - v$ must be taken on in $\Omega$ and we conclude $\eqref{u le v}$ from the strict monotonicity of $F$ with respect to $z$. Note that  when $G$ is strictly decreasing, we need only assume $\mathcal G$ is weakly oblique, that is $G_p\cdot\nu \ge 0$ on $\partial\Omega$ while when $F$ is strictly decreasing we need only assume $\mathcal F$ is degenerate elliptic. In the case when there is no strict monotonicity, the difference $w = u-v$ will satisfy a linear uniformly elliptic differential inequality of the form
$$ {\mathcal{L}}w : =  a^{ij}D_{ij}w + b_iD_iw +cw \ge 0,$$
together with an oblique boundary inequality,
$ \beta\cdot Dw \ge \gamma w$, with coefficients $c\le 0$ and $\gamma\ge 0$, and the result follows from the strong maximum principle and Hopf boundary point lemma; (see\cite{GTbook}).

From Lemma \ref{comparison} we have immediately the uniqueness in Theorem \ref{Th1.2} and the inequality
$\underline u\le u\le\bar u$,
where $\bar u$ and $\underline u$ are the assumed elliptic supersolution \eqref{super equation}-\eqref{super boundary} and subsolution.

Next we obtain a gradient bound for $A$-convex functions for Neumann problem \eqref{1.1}-\eqref{1.2}, where $A$ satisfies a quadratic bound from below, \eqref{QS}, by a modification of our argument for the Dirichlet problem in \cite{JTY2013}. For this purpose, we  formulate the following gradient estimate as a lemma.

\begin{Lemma}\label{gradient bound}
Let $u\in C^2(\Omega)\cap C^1(\bar \Omega)$ satisfy
\begin{equation}\label{weak convexity}
D^2u\ge -\mu_0(1+|Du|^2) I,
\end{equation}
in a $C^2$ domain  $\Omega\subset\mathbb{R}^n$, with
\begin{equation}\label{lower for boundary normal}
D_\nu u \ge -\sigma,
\end{equation}
on $\partial\Omega$, where $\mu_0$ and $\sigma$   are non-negative constants. Then we have the estimate
\begin{equation}\label{grad bound estimate}
|Du|\le C,
\end{equation}
where $C$ depends on $\mu_0, \sigma, \Omega$ and $\sup |u|$.
\end{Lemma}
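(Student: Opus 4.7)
The plan is to establish this bound via a Bernstein-type maximum principle argument applied to the auxiliary function
\[
W(x) = e^{\alpha u(x)}|Du(x)|^2
\]
on $\bar\Omega$, where $\alpha > 2\mu_0$ is a constant to be fixed. A maximum point $x_0\in\bar\Omega$ of $W$ (or of a suitably barrier-augmented version in the boundary case) will yield a pointwise bound on $|Du(x_0)|$, which then propagates to the global estimate via the uniform bound on $u$.

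At an interior maximum $x_0\in\Omega$ of $W$, the condition $DW(x_0) = 0$ reads
\[
(D^2u)(x_0)\,Du(x_0) = -\tfrac{\alpha}{2}|Du(x_0)|^2\,Du(x_0),
\]
so $Du(x_0)$ is an eigenvector of $D^2u(x_0)$ with eigenvalue $\lambda = -\alpha|Du(x_0)|^2/2$. Compatibility with the Hessian hypothesis $D^2u \ge -\mu_0(1+|Du|^2)I$ forces $\lambda \ge -\mu_0(1+|Du(x_0)|^2)$, i.e.\ $(\alpha - 2\mu_0)|Du(x_0)|^2 \le 2\mu_0$. Choosing $\alpha = 3\mu_0$ then yields $|Du(x_0)|^2 \le 2$ and hence $\sup_{\bar\Omega} W \le 2 e^{3\mu_0 M_0}$, where $M_0 = \sup|u|$, which propagates to $|Du|^2 \le 2e^{6\mu_0 M_0}$ on $\bar\Omega$ whenever the maximum is attained in the interior.

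To handle a boundary maximum, replace $W$ by $\widetilde W = W + \beta\phi$, where $\phi\in C^2(\bar\Omega)$ is a defining function of $\Omega$ (so $\phi<0$ in $\Omega$, $\phi = 0$ and $D_\nu\phi = -1$ on $\partial\Omega$) and $\beta>0$ is to be chosen in terms of $\mu_0$, $\sigma$, and the $C^2$ geometry of $\Omega$. At a boundary maximum $y_0\in\partial\Omega$ of $\widetilde W$, since $D_\tau\phi(y_0)=0$ for tangent $\tau$, the vanishing of $D_\tau\widetilde W(y_0)$ coincides with $D_\tau W(y_0)=0$ and determines the tangential components of $(D^2u + \mu_0(1+|Du|^2)I)\,Du$ at $y_0$. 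The normal inequality $D_\nu\widetilde W(y_0)\le 0$ becomes $D_\nu W(y_0)\le\beta$. Combining these tangential identities with the positive semi-definiteness of $D^2u + \mu_0(1+|Du|^2)I$ and the boundary hypothesis $D_\nu u\ge-\sigma$ — and taking $\beta$ sufficiently large to absorb remainder terms involving the second fundamental form of $\partial\Omega$ — yields a bound $|Du(y_0)|\le C_2(\mu_0,\sigma,M_0,\Omega)$.

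The main obstacle is precisely the boundary analysis. Since only the one-sided inequality $D_\nu u\ge-\sigma$ is available rather than a full Neumann equality, and no underlying PDE is assumed for $u$, the off-diagonal Hessian term $(D^2u\cdot\nu)\cdot Du$ in the expansion of $D_\nu W$ cannot be eliminated by tangentially differentiating the boundary condition. It must instead be absorbed by the barrier $\beta\phi$ together with the rank-type restrictions on $(D^2u + \mu_0(1+|Du|^2)I)\,Du$ supplied by the tangential identities at the boundary maximum. This mirrors the strategy used for the analogous gradient estimate in the Dirichlet case of \cite{JTY2013}.
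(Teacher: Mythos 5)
Your interior argument is correct and parallels the paper's: at an interior critical point of $W = e^{\alpha u}|Du|^2$, contracting $DW = 0$ with $Du$ and invoking $D^2u \ge -\mu_0(1+|Du|^2)I$ gives $(\alpha - 2\mu_0)|Du|^2 \le 2\mu_0$, which is exactly the eigenvalue bound you describe.

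The boundary case, however, has a genuine gap, and you half-acknowledge it yourself. Adding $\beta\phi$ with $\phi \le 0$ only weakens the normal inequality to $D_\nu W(y_0) \le \beta$, and the expansion
\begin{equation*}
D_\nu W = e^{\alpha u}\bigl(\alpha\, D_\nu u\,|Du|^2 + 2\langle D^2u\,Du,\nu\rangle\bigr)
\end{equation*}
involves the off-diagonal term $\langle D^2u\,Du,\nu\rangle$, which neither the tangential identities $\langle D^2u\,Du,\tau\rangle = -\tfrac{\alpha}{2}|Du|^2 u_\tau$ nor the positive semi-definiteness of $M := D^2u + \mu_0(1+|Du|^2)I$ control. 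Cauchy--Schwarz gives $|\langle MDu,\nu\rangle| \le \sqrt{\langle MDu,Du\rangle\,\langle M\nu,\nu\rangle}$, but $\langle M\nu,\nu\rangle = u_{\nu\nu} + \mu_0(1+|Du|^2)$ is unbounded above by hypothesis. In particular, when $D_\nu u \in [-\sigma,0]$ while the tangential gradient $\delta u$ is large, the normal inequality $D_\nu W \le \beta$ is satisfied trivially and yields nothing; no choice of $\beta$ depending only on the data closes the argument. (Your computation does close in the subcase $D_\nu u \ge 0$ by contracting with $Du$, but that is precisely the nonnegativity that is not available.)

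The paper sidesteps this entirely with a shift: set $\tilde u = u - \sigma\phi$, where $\phi$ is a negative defining function with $D_\nu\phi = -1$ on $\partial\Omega$. Then $D_\nu\tilde u \ge 0$ on $\partial\Omega$, and \eqref{weak convexity} transfers to $D^2\tilde u \ge -\mu_1(1+|D\tilde u|^2)I$ for a constant $\mu_1 = \mu_1(\mu_0,\sigma,\Omega)$. Now at a maximum of $w = e^{\kappa\tilde u}|D\tilde u|^2$ over $\bar\Omega$, whether interior or on $\partial\Omega$, one has $D\tilde u \cdot Dw \le 0$: interior because $Dw=0$; on the boundary because $D_\nu\tilde u \ge 0$, $D_\nu w \le 0$ and the tangential components of $Dw$ vanish. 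This single scalar inequality, expanded and combined with the quadratic Hessian bound, gives $(\kappa - 2\mu_1)|D\tilde u|^2 \le 2\mu_1$ for $\kappa > 2\mu_1$, and hence the global bound. Crucially, it never isolates $\langle D^2u\,Du,\nu\rangle$. Replacing your barrier $\beta\phi$ on the auxiliary function $W$ by the shift $u \mapsto u - \sigma\phi$ inside the auxiliary function is the missing idea.
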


\begin{proof}
Defining $\tilde u = u - \sigma \phi$, where as in Section \ref{Section 2}, $\phi\in C^2(\bar\Omega)$ is a negative defining function for $\Omega$ satisfying $D_\nu \phi = -1$ on $\partial \Omega$, we see that $\nu\cdot D \tilde u \ge 0$ on $\partial \Omega$.
Consequently at a maximum point $x_0\in \bar \Omega$ of the function
\begin{equation}\label{gradient auxi}
w= e^{\kappa\tilde u}|D\tilde u|^2,
\end{equation}
we have
\begin{equation}\label{property on max}
D\tilde u\cdot Dw \le 0.
\end{equation}
From \eqref{weak convexity}, we have
\begin{equation}\label{lower qb for tilde u}
\begin{array}{rl}
D^2\tilde u \!\!&\!\!\displaystyle = D^2 u - \sigma D^2 \phi \\
        \!\!&\!\!\displaystyle \ge -\mu_0(1+|Du|^2)I - \sigma \Lambda_\phi I\\
        \!\!&\!\!\displaystyle \ge -\mu_0(1+2|D\tilde u|^2 + 2\sigma^2|D\phi|^2)I - \sigma \Lambda_\phi I\\
        \!\!&\!\!\displaystyle \ge -\mu_1(1+|D\tilde u|^2)I,
\end{array}
\end{equation}
for some positive constant $\mu_1$ depending on $\mu_0$, $\sigma$, $D\phi$ and $\Lambda_\phi$, where $\Lambda_\phi$ denotes the maximum eigenvalue of the Hessian matrix of $\phi$ and depends on the domain $\Omega$.
With the lower quadratic bound \eqref{lower qb for tilde u} for the Hessian matrix $D^2\tilde u$ in hand, by choosing the constant $\kappa$ sufficiently large as in Section 4, \cite{JTY2013}, we can obtain from \eqref{property on max},
\begin{equation}\label{gradient tilde u}
|D\tilde u| \le C,
\end{equation}
at $x_0$, where the constant $C$ depends on $\mu_0$, $\sigma$ and $\Omega$. We then conclude a global gradient estimate from  \eqref{gradient tilde u} and the construction of $\tilde u$,
\begin{equation}\label{gradient}
|Du| \le C,
\end{equation}
where $C$ depends on $\mu_0$, $\Omega$, $\sigma$ and $\sup |u|$.
\end{proof}
We remark that by taking more careful account of the constant dependence in the proof of Lemma \ref{gradient bound}
we infer a sharper estimate
\begin{equation}\label{sharper estimate}
|Du| \le C(1+\sigma),
\end{equation}
where $C$ depends on $\mu_0$, $\Omega$ and $\sup |u|$.

Note that the gradient estimate \eqref{grad bound estimate} in Lemma \ref{gradient bound} and the sharper gradient estimate \eqref{sharper estimate} hold for any solution $u$ satisfying the weak convexity condition \eqref{weak convexity} and the lower bound condition \eqref{lower for boundary normal} for normal derivative on the boundary. We now apply Lemma \ref{gradient bound} to obtain the gradient estimate for $A$-convex solutions of the Neumann problem \eqref{1.1}-\eqref{1.2} with $A$ satisfying the lower quadratic bound \eqref{QS}. From the $A$-convexity of the solution $u$ and the quadratic structure condition \eqref{QS}, the solution $u$ satisfies the weak convexity condition \eqref{weak convexity}. The Neumann boundary condition \eqref{1.2} provides us a lower bound $D_\nu u \ge \inf_{\partial \Omega}\varphi(x,u)$. Applying Lemma \ref{gradient bound}, we then obtain the global gradient estimate for Neumann problem \eqref{1.1}-\eqref{1.2}, that is $|Du| \le C$ for $C$ depending on $\mu_0$, $\Omega$, $\varphi$ and $\sup|u|$.

Since we now have obtained the derivative estimates up to second order, we can use the continuity method to prove our existence theorem.
\begin{proof}[Proof of Theorem \ref{Th1.2}.]
From the second derivative estimate, Theorem \ref{Th1.1} and the preceding solution and gradient estimates we can derive a global second derivative H\"{o}lder estimate
\begin{equation}\label{global Holder}
|u|_{2,\alpha;\Omega}\le C,
\end{equation}
for elliptic solutions $u\in C^4(\Omega)\cap C^3(\bar \Omega)$ of the semilinear Neumann boundary value problem \eqref{1.1}-\eqref{1.2} for $0<\alpha<1$. The estimate \eqref{global Holder} is obtained in \cite{LT1986}, Theorem 3.2, (see also \cite{LieTru1986,Tru1984}). With this $C^{2,\alpha}$ estimate, we can use the method of continuity, Theorem 17.22 and Theorem 17.28 in \cite{GTbook}, to derive the existence of a solution $u\in C^{2,\alpha}(\bar \Omega)$, using the supersolution $\bar u$ as an initial solution. To be rigorous, we should assume that $A$ and $B$ are $C^{2,\alpha}$ smooth, $\varphi$ is $C^{3,\alpha}$ smooth and $\Omega\in C^{4,\alpha}$ for some $\alpha>0$ to get a solution
$u\in C^{4,\alpha}(\bar\Omega)$ by the Schauder theory, (see \cite{GTbook}, Section 6.7), and then by approximation get a solution
$u\in C^{3,\alpha}(\bar\Omega)$. Alternatively we can use the Aleksandrov-Bakel'man maximum principles (see \cite{GTbook}, Theorem 9.1, Theorem 9.6) to carry over the proof of Theorem \ref{Th1.1} to solutions $u\in W^{4,n}(\Omega)\cap C^3(\bar \Omega)$ and use $L_p$ regularity as well, (\cite{GTbook}, Section 9.5) to improve $C^{2,\alpha}(\bar \Omega)$ solutions with $0<\alpha<1$ to be in the Sobolev spaces $W^{4,p}(\Omega)\cap C^{3,\delta}(\bar \Omega)$ for all $p<\infty$, $0<\delta<1$.
\end{proof}

In the rest of this section we will consider more explicit conditions for solution bounds. Here we consider the oblique boundary value problems \eqref{1.7}-\eqref{1.8} with $F$ defined by \eqref{1.9} and $G$ defined by \eqref{G in oblique form}, that is  the Monge-Amp\`ere type equation \eqref{1.1} together with the oblique boundary condition
\begin{equation}\label{oblique bvp semilinear form}
D_\beta u = \varphi(x,u),\quad \mbox{on}\ \  \partial \Omega.
\end{equation}
First we note that we also obtain bounds for solutions $u$ of \eqref{1.1}-\eqref{1.2} if $ \bar u$ and  $\underline u$ are only assumed to be supersolutions and subsolutions, without any assumed boundary conditions, provided we strengthen the monotonicity of $\varphi$. In particular we may assume, as in \cite{LTU1986},   there exists a positive constant $\gamma_0$ such that
\begin{equation}\label{uniform monotonicity}
\varphi_z(x,z) \ge \gamma_0
\end{equation}
for all $(x, z)\in\partial\Omega\times\mathbb{R}$. In the light of Lemma \ref{comparison}, we may interpret a supersolution as satisfying \eqref{super equation} only at points of ellipticity.  Since $A$ and $B$ are non-decreasing,  supersolutions  and elliptic subsolutions  are preserved under addition and subtraction respectively of positive constants. Accordingly, by subtracting a positive constant from $\underline u$ and using \eqref{uniform monotonicity} we can assume $D_\beta \underline u \ge \varphi(x,\underline u)$ on $\partial \Omega$, whence $u\ge \underline u$ in $\Omega$. Similarly by adding a positive constant to
$\bar u$ we obtain $D_\beta \bar u \le \varphi(x,\bar u)$ on $\partial \Omega$, so that $u\le \bar u$ in $\Omega$. Note that for this argument we may  replace \eqref{uniform monotonicity} by the weaker conditions
\begin{equation}\label{3.14}
{ ({\rm sign}z)}\varphi (\cdot,z) \rightarrow \infty, \ {\rm as}\ |z|\rightarrow \infty.
\end{equation}
The conditions \eqref {uniform monotonicity}, \eqref{3.14} may be further weakened when constants are subsolutions or supersolutions.  We first consider the bound from below,
 under the following conditions:
\begin{equation}\label{alternative SC1+}
A(x,z,0)\le 0, \ \det[-A(x,z,0)] > B(x,z,0) ,\quad {\rm for \  all}\ x\in \Omega,\  z<- K,
\end{equation}
\begin{equation}\label{alternative SC2+}
\varphi(x,z) <0 ,\quad {\rm for \  all}\ x\in \partial\Omega, \ z < -K,
\end{equation}
where $K$ is a positive constant. Under the assumptions \eqref{alternative SC1+} and \eqref{alternative SC2+}, we can readily obtain the solution bound as follows. Suppose $u$ attains its minimum over $\bar \Omega$ at a point $x_0$ and
$u(x_0) < -K$. If $x_0\in \Omega$, we have $Du(x_0)=0$, $D^2u(x_0)\ge 0$. From the equation \eqref{1.1}, we have $\det [-A(x_0,u(x_0),0)]-B(x_0,u(x_0),0)\le 0$ so that by \eqref{alternative SC1+}, we must have $u(x_0)\ge- K$. If $x_0\in\partial\Omega$, we have $D_\beta u(x_0)\ge 0$. From the oblique boundary condition, we have $\varphi(x_0,u(x_0))\ge 0$. By \eqref{alternative SC2+}, we again have $u(x_0)\ge -K$. Note that condition \eqref{alternative SC1+} implies sufficiently small constants are subsolutions of the oblique boundary value problem \eqref{1.1}-\eqref{oblique bvp semilinear form} thereby providing lower solution bounds, by the comparison principle, Lemma \ref{comparison}.  Therefore the subsolution assumption in Theorem \ref{Th1.2}  can be replaced by  the structure conditions \eqref{alternative SC1+} and \eqref{alternative SC2+}, with  $\min\underline u$ replaced by $-K$ in $\mathcal I$. We also remark that condition  \eqref{alternative SC1+} follows from a uniform monotonicity condition on $A$, namely
\begin{equation}\label{uniform monotonicity on A}
D_zA_{ij}(x,z,p)\xi_i\xi_j \ge \gamma_1 |\xi|^2,
\end{equation}
for all $(x,z,p)\in \Omega \times \mathbb{R} \times \mathbb{R}^n$, $\xi\in \mathbb{R}^n$ and some $\gamma_1>0$,
which is a stronger form of the A4w condition used for generated prescribed Jacobian equations in geometric optics in \cite{JT2014, Tru2014}, together with $B$ being non-decreasing in $z$.

In this sense, the condition \eqref{alternative SC1+} is a weakening of the uniform monotonicity of $A$, while the condition \eqref{alternative SC2+} is a weakening of the uniform monotonicity of $\varphi$. On the other hand, condition \eqref{alternative SC1+} is restrictive in that it excludes the case when $A$ is independent of $z$, which occurs in optimal transportation.

Corresponding conditions also provide bounds from above. Here though the analogue of \eqref{alternative SC1+} is more general, namely
\begin{equation}\label{alternative SC1-}
\det[-A(x,z,0)]  < B(x,z,0) ,\quad {\rm for \  all}\ x\in \Omega,\  z>K, \  A(x,z,0) < 0,
\end{equation}
while instead of \eqref{alternative SC2+}, we have
\begin{equation}\label{alternative SC2-}
\varphi(x,z) >0 ,\quad {\rm for \  all}\ x\in \partial\Omega, \ z > K,
\end{equation}
where $K$ is a positive constant. Note that condition \eqref{alternative SC1-} extends the condition in Section 4 of \cite{JTY2013}, namely that the maximum eigenvalue of $A(x,z,0)$ is non-negative for all $x\in \Omega$, $z>K$ for some positive constant $K$ and implies that constants larger than $K$ will be supersolutions, where they are elliptic.

To complete this section, we derive a  lower bound for optimal transportation equations
and present the corresponding existence result.
\subsection*{Optimal transportation equations}

In the optimal transportation case, we can replace the existence of a subsolution in Theorem \ref{Th1.2} by an extension of the sharp conditions (1.4), (1.5) in \cite{LTU1986}, through an extension of the Aleksandrov-Bakel'man estimate in Theorem 2.1 of \cite{LTU1986}. Optimal transportation equations are special cases of prescribed Jacobian equations where the mapping $Y$ is generated by a cost function $c$ defined on a domain $\mathcal D\subset\mathbb{R}^n\times\mathbb{R}^n$. We assume
$\bar\Omega\times\bar\Lambda\subset\mathcal D$, for some domain $\Lambda\subset\mathbb{R}^n$, and $c\in C^2(\mathcal D)$
 satisfies the conditions, (from \cite{MTW2005}):

\begin{itemize}
\item[{\bf A1}:]
For each $x\in \Omega$, the mapping $c_x(x,\cdot)$ is one-to-one in $y \in \mathcal D^*_x = \{y\in \mathbb{R}^n\big{|}(x,y)\in \mathcal D\}$;

\item[{\bf A2}:]
$\det c_{x,y}\neq 0$ on $\mathcal D$.
\end{itemize}
Then the mapping $Y$ is given by
\begin{equation}\label{Y OT case}
Y(x,p)=c_x^{-1}(x,\cdot)(p)
\end{equation}
and is well defined for $p\in \mathcal U_x=\{p \in \mathbb{R}^n \big{|}\  p=c_x(x,y)$ for some $y \in \mathcal D^*_x\}$. In the resultant Monge-Amp\`ere type equation, we then have from \eqref{def:AB},
\begin{equation}\label{AB OT case}
A(x,z,p) = A(x,p)=c_{xx}(x,Y(x,p)), \ \ \ B=|\det c_{x,y}|\psi,
\end{equation}
and equation \eqref{1.1} is well defined for solutions $u$ which are $A$-convex and satisfy $Du(x)\in \mathcal U_x$, for each $x\in \Omega$. We call such solutions admissible. In the optimal transportation case, $c$-affine functions, that is functions of the form $\bar u=c(x,y)+c_0$, for constant $c_0$ and $(\Omega,\{y\})\subset \mathcal D$ are automatically supersolutions as they satisfy the homogeneous equation
\begin{equation}\label{equation OT case}
\det (D^2\bar u - A(x,D\bar u))=0,
\end{equation}
and hence provide upper bounds for solutions of (weakly) oblique boundary value problems,
\begin{equation}\label{oblique bvp OT case}
D_\beta u=\varphi(x,u),\quad \mbox{on } \ \partial\Omega,
\end{equation}
where $\beta\cdot\nu\ge 0$ on $\partial \Omega$, under a uniform monotonicity condition \eqref{uniform monotonicity}. For lower bounds we impose a structure condition
\begin{equation}\label{psi OT case}
\psi(x,z,p)\le \frac{f(x)}{f^* \circ Y(x,p)}
\end{equation}
for all $x\in \Omega$, $z\le m_0$, $Y(x,p)\in \Lambda$, where $f\ge 0,\in L^1(\Omega)$, $f^*>0,\in L^1_{loc}(\Lambda)$ satisfy
\begin{equation}\label{sharp condition OT case}
\int_\Omega f < \int_\Lambda f^*
\end{equation}
and $m_0$ is a constant.

We now have the lower solution bound in the optimal transportation case.

\begin{Lemma}\label{Lemma 3.3}
Let $u\in C^2(\Omega)\cap C^1(\bar \Omega)$ be an admissible solution of equation \eqref{PJE}, in the optimal transportation case \eqref{Y OT case}, with cost function $c$ satisfying A1, A2. Suppose that $\psi$ satisfies \eqref{psi OT case} and
\begin{equation}\label{boundary ineq}
D_\beta u \le \gamma_0 u +\varphi_0 \quad \mbox{on } \partial \Omega,
\end{equation}
 for $u\le m_0$, where $\beta \in L^\infty(\partial\Omega)$, $\beta \cdot \nu \ge 0$ on $\partial \Omega$ and $\gamma_0 > 0$ and $\varphi_0 \ge 0$ are constants. Then we have the lower bound
\begin{equation}
u\ge -C, \quad \mbox{in}\ \  \Omega,
\end{equation}
where $C$ is a positive constant depending on $\Omega, f, f^*, \beta, \gamma_0, \varphi_0$ and $c$.
\end{Lemma}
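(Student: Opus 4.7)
My plan is to adapt the Aleksandrov--Bakel'man argument of Theorem 2.1 in \cite{LTU1986} to the optimal transportation setting, with the $c$-normal mapping $T(x) := Y(x, Du(x))$ playing the role of the classical gradient mapping. The starting point is the identity $|\det DT| = \psi$ on $\Omega$, which follows by implicit differentiation of $Du(x) = c_x(x,T(x))$ to get $DT = c_{xy}^{-1}(D^2u - A)$, and then invoking the equation $\det(D^2u - A) = B = |\det c_{xy}|\psi$. Setting $\Omega_0 := \{x \in \Omega : u(x) < m_0\}$, the structure condition \eqref{psi OT case} together with the change of variables formula (counting multiplicity) give
\begin{equation*}
\int_{T(\Omega_0)} f^*\, dy \le \int_{\Omega_0}(f^* \circ T)\,|\det DT|\, dx = \int_{\Omega_0}\psi\,(f^* \circ T)\, dx \le \int_{\Omega_0} f\, dx \le \int_\Omega f.
\end{equation*}

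Now let $m := -\inf_{\bar\Omega} u$, fix $x^* \in \bar\Omega$ with $u(x^*) = -m$, and for each $y \in \Lambda$ pick a minimizer $x_0(y) \in \bar\Omega$ of $v_y := u - c(\cdot, y)$. Comparing $v_y(x_0(y))$ with $v_y(x^*)$ yields $u(x_0(y)) \le -m + M$, where $M := \operatorname{osc}_{\bar\Omega \times \bar\Lambda} c < \infty$ by the $C^2$ assumption on $c$. In particular, once $m > M - m_0$, every $x_0(y)$ lies in $\Omega_0$, so both the structure condition \eqref{psi OT case} and the boundary inequality \eqref{boundary ineq} are valid at $x_0(y)$.

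The decisive step is to rule out boundary minimizers when $m$ is large. If $x_0(y) \in \Omega$, the interior first-order condition $Du(x_0) = c_x(x_0,y)$ together with A1 gives $T(x_0) = y$, so $y \in T(\Omega_0)$. If instead $x_0(y) \in \partial\Omega$, then tangential stationarity gives $D_\tau u(x_0) = D_\tau c(x_0, y)$ for every tangent vector $\tau$, while the inward-normal comparison at the boundary minimum gives $D_\nu u(x_0) \ge D_\nu c(x_0, y)$; since $\beta \cdot \nu \ge 0$, these combine into $D_\beta u(x_0) \ge D_\beta c(x_0, y) \ge -C_0$, where $C_0$ is controlled by $\|\beta\|_\infty$ and $\|Dc\|_{\bar\Omega \times \bar\Lambda}$. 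Inserting \eqref{boundary ineq} at $x_0(y)$ then forces $u(x_0) \ge -(C_0 + \varphi_0)/\gamma_0 =  -C_1$, while $u(x_0) \le -m + M$, so boundary minimizers can occur only when $m \le C_1 + M$.

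Consequently, for $m > \max(M - m_0,\, C_1 + M)$ every minimizer $x_0(y)$ is interior and $\Lambda \subset T(\Omega_0)$; substituting into the integral bound above yields $\int_\Lambda f^* \le \int_\Omega f$, contradicting \eqref{sharp condition OT case}. Hence $m$ is bounded by the explicit constant $\max(M - m_0,\, C_1 + M)$, depending only on $\Omega, f, f^*, \beta, \gamma_0, \varphi_0$ and $c$, which is the asserted lower bound. I expect the main obstacle to be the boundary analysis of the third paragraph: converting the one-sided inequalities at a boundary minimum of $v_y$ into a useful lower bound for $D_\beta u(x_0)$ is precisely where the weak obliqueness $\beta \cdot \nu \ge 0$ and the strict monotonicity $\gamma_0 > 0$ are simultaneously indispensable, and it is what transfers the Bakel'man covering argument from the setting of \cite{LTU1986} to the present oblique boundary problem.
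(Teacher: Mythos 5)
Your argument is essentially the same Aleksandrov--Bakel'man argument as the paper's: introduce the $c$-normal map $T = Y(\cdot,Du)$, use the change-of-variables inequality and \eqref{psi OT case} to bound $\int_{T(\Omega_0)}f^*$ by $\int_\Omega f$, and analyse minimisers of $v_y = u - c(\cdot,y)$ (i.e.\ $c$-affine supports from below) via the interior/boundary dichotomy, exploiting the boundary inequality \eqref{boundary ineq} together with the weak obliqueness and strict monotonicity $\gamma_0>0$ in the boundary case. The only structural difference is cosmetic: you run the argument by contradiction over \emph{all} $y\in\Lambda$ at once, whereas the paper selects a single $y_0\in\Lambda\setminus T(\Omega_0)$ and reads the lower bound off the corresponding support inequality. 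Both formulations are equivalent.

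There is, however, one genuine gap. You take $M := \operatorname{osc}_{\bar\Omega\times\bar\Lambda} c$ and let $C_0$ be controlled by $\|Dc\|_{\bar\Omega\times\bar\Lambda}$, asserting these are finite ``by the $C^2$ assumption on $c$''. But $c\in C^2(\mathcal D)$ only gives local bounds, and nothing in the hypotheses forces $\bar\Lambda$ to be compact: $\Lambda$ may be unbounded, and $f^*$ is merely $L^1_{loc}(\Lambda)$, so $\int_\Lambda f^*$ can be $+\infty$. In that situation $M$ and $C_0$ are not finite and your threshold $\max(M-m_0,\,C_1+M)$ is vacuous. This is precisely why the paper's proof introduces the exhaustion $\Lambda_R = \{y\in\Lambda : |y|<R,\ \operatorname{dist}(y,\partial\Lambda)>1/R\}$ and picks $R$ from $f$, $f^*$ so that $\int_{\Lambda_R}f^*$ already exceeds (or equals) $\int_\Omega f$; the cost function and its gradient are then controlled on $\bar\Omega\times\bar\Lambda_R$, and the final constant depends on $\sup_{\Omega\times\Lambda_R}|Dc|$. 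Your proof is repaired by replacing ``for each $y\in\Lambda$'' with ``for each $y\in\Lambda_R$'' for such an $R$, deriving $\Lambda_R\subset T(\Omega_0)$ for large $m$ and contradicting $\int_{\Lambda_R}f^* > \int_\Omega f \ge \int_{T(\Omega_0)}f^*$, but without this truncation the claim that $M<\infty$ is unjustified.
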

\begin{proof}
Our proof is adapted from the second author's 2004 Singapore Institute of Mathematical Sciences lectures and the case where $c(x,y)=x\cdot y$, that is $Y=p$ and $A=0$, in \cite{LTU1986}. First, we note that if we have a global support from below at a point $x_0\in \Omega$, that is
\begin{equation}\label{global support OT case}
u(x)\ge u(x_0)+c(x,y_0)-c(x_0,y_0)
\end{equation}
for all $x \in \Omega$, then we must have $y_0=Y(x_0,Du(x_0))$.
Defining $T=Y(\cdot, Du)$, we have by \eqref{PJE}, \eqref{psi OT case} and the change of variable formula
\begin{equation}
\begin{array}{rl}
\displaystyle\int_\Omega f \!\!&\!\!\displaystyle \ge \int_\Omega |\det DT| f^*\circ T\\
                           \!\!&\!\!\displaystyle \ge \int_{T(\Omega_0)}f^*
\end{array}
\end{equation}
where $\Omega_0=\{x\in \Omega \big{|}\  u(x)<m_0\}$. Hence by our condition \eqref{sharp condition OT case} on $f$ and $f^*$, there exists a point $y_0\in \Lambda-T(\Omega_0)$. It then follows by upward vertical translation of a $c$-affine lower bound, that there exists a point $x_0\in \partial\Omega_0$ such that
\begin{equation}\label{global support OT case}
u(x)\ge u(x_0)+c(x,y_0)-c(x_0,y_0)
\end{equation}
 for all $x\in \Omega$. If $x_0\in \partial \Omega$, we must also have
\begin{equation}
D_\beta u(x_0) \ge D_\beta c(x_0,y_0)
\end{equation}
whence by the boundary inequality \eqref{boundary ineq},we obtain
\begin{equation}
u(x_0) \ge \frac{1}{\gamma_0} [D_\beta c(x_0,y_0) - \varphi_0].
\end{equation}
If $x_0 \not \in \partial\Omega$, then we must have $u(x_0)=m_0$. Hence by \eqref{global support OT case} again, we obtain for $x_0\in \partial \Omega$
\begin{equation}\label{lower u bound x on boundary}
\begin{array}{rl}
u(x) \!\!&\!\! \displaystyle \ge u(x_0) + c(x,y_0)- c(x_0,y_0) \\
     \!\!&\!\! \displaystyle \ge  \frac{1}{\gamma_0} [D_\beta c(x_0,y_0) - \varphi_0] + c(x,y_0)- c(x_0,y_0)  \\
     \!\!&\!\! \displaystyle \ge  - \frac{\varphi_0}{\gamma_0} -(\frac{|\beta|}{\gamma_0}+{\rm diam} \Omega)\sup_\Omega |c_x(\cdot,y_0)|
\end{array}
\end{equation}
while for $x_0\not \in \partial\Omega$ we obtain
\begin{equation}\label{lower u bound x not on boundary}
u(x) \ge m_0 -{\rm diam} \Omega \sup_\Omega |c_x(\cdot, y_0)|.
\end{equation}
To remove the dependence on $y_0$ in \eqref{lower u bound x on boundary} and \eqref{lower u bound x not on boundary}, we may consider an exhaustion of $\Lambda$, say by defining subdomains
\begin{equation}
\Lambda_R = \{y\in \Lambda \big{|} \ |y|<R, {\rm dist}(y,\partial \Lambda)> \frac{1}{R} \}
\end{equation}
for $R\ge 1$. Then by \eqref{sharp condition OT case}, we have
\begin{equation}
\int_\Omega f = \int_{\Lambda_R} f^*
\end{equation}
for some sufficiently large $R$, and we obtain from \eqref{lower u bound x on boundary} and \eqref{lower u bound x not on boundary}, the estimate,
\begin{equation}
u(x) \ge  \min\{m_0,-\frac{\varphi_0}{\gamma_0}\}  - (\frac{|\beta|}{\gamma_0}+ {\rm diam}\Omega) \sup_{\Omega\times \Lambda_R} |Dc|.
\end{equation}
This completes the proof of Lemma \ref{Lemma 3.3}.
\end{proof}

As a corollary of Lemma \ref{Lemma 3.3} and the proof of Theorem \ref{Th1.2}, we then have the following variant of Theorem \ref{Th1.2} in the optimal transportation case. For this purpose we note that the boundary condition
\eqref {1.2} and the monotonicity condition \eqref {uniform monotonicity} imply \eqref {boundary ineq} with $\beta = \nu$ and

$$ \varphi_0 = - \gamma_0 m_0 + \sup_{\partial\Omega} \varphi(\cdot,m_0). $$

\begin{Corollary}\label{Cor 3.1}
 Suppose that equation \eqref{1.1} is a prescribed Jacobian equation of the form \eqref{PJE} generated by a cost function $c\in C^2(\mathcal{D})$ satisfying conditions A1 and A2 and $\mathcal U_x=\mathbb{R}^n$ for all $x\in \Omega$, with $\psi$ satisfying the structure conditions \eqref{psi OT case}, \eqref{sharp condition OT case}. Let $A$, $B$, $\varphi$ and $\Omega$ satisfy the hypotheses of Theorem \ref{Th1.2} except for the existence of an elliptic subsolution, with $\varphi$ satisfying \eqref{uniform monotonicity} and $\Omega$ assumed to be uniformly $A$-convex with respect to $\varphi$ and $-C$, that is  \eqref{A convexity}  holds for
 $p\cdot \nu \ge \varphi(\cdot, -C)$ on $\partial\Omega$, where $C$ is the constant in Lemma 3.3.
Then the Neumann boundary value problem \eqref{1.1}-\eqref{1.2} has a unique elliptic solution $u\in C^{3,\alpha}(\bar \Omega)$ for any $\alpha <1$.
\end{Corollary}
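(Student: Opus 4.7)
The plan is to follow the argument for Theorem \ref{Th1.2} essentially verbatim, with the given elliptic subsolution replaced by the a priori lower bound $u\ge -C$ furnished by Lemma \ref{Lemma 3.3}, and with the supersolution provided by a suitable $c$-affine function. Since the comparison principle (Lemma \ref{comparison}), the gradient bound (Lemma \ref{gradient bound}) and the second derivative estimate (Theorem \ref{Th1.1}) all apply under the stated hypotheses, the genuinely new ingredients are the $L^\infty$ bound and the verification that the $A$-convexity assumption stated with respect to $\varphi$ and $-C$ translates into $A$-convexity with respect to the (unknown) solution $u$, so that Theorem \ref{Th1.1} is directly applicable.

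For the upper bound I would fix any $y_0\in \Lambda$ and set $\bar u(x) = c(x,y_0) + c_0$ for a constant $c_0$; by \eqref{equation OT case} this is an elliptic supersolution of \eqref{1.1}, and for $c_0$ sufficiently large the uniform monotonicity \eqref{uniform monotonicity} of $\varphi$ forces $D_\nu \bar u \le \varphi(\cdot,\bar u)$ on $\partial\Omega$, so Lemma \ref{comparison} gives $u\le \bar u$. For the lower bound, the Neumann condition \eqref{1.2} together with \eqref{uniform monotonicity} yields, for $u\le m_0$, the inequality $D_\nu u = \varphi(x,u) \le \gamma_0 u + \varphi_0$ with $\varphi_0 = \sup_{\partial\Omega}\varphi(\cdot,m_0) - \gamma_0 m_0$, so that \eqref{boundary ineq} holds with $\beta = \nu$, and Lemma \ref{Lemma 3.3} produces $u \ge -C$. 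Using now the monotonicity of $\varphi$ together with the regularity of $A$ as in the remark following \eqref{A convexity wrt u}, the hypothesis that $\Omega$ is uniformly $A$-convex with respect to $\varphi$ and $-C$ upgrades automatically to uniform $A$-convexity with respect to $u$, since on $\partial\Omega$ we have $p\cdot\nu = \varphi(\cdot,u) \ge \varphi(\cdot,-C)$.

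With the solution bounds in hand, the gradient estimate follows from Lemma \ref{gradient bound} via \eqref{QS} and the boundary lower bound $D_\nu u \ge \inf \varphi(\cdot,u)$, and the global second derivative estimate follows from Theorem \ref{Th1.1} with the supersolution $\bar u$ constructed above. The global $C^{2,\alpha}$ estimate \eqref{global Holder} is then supplied by the oblique boundary Schauder theory of Lieberman and Trudinger as cited in the proof of Theorem \ref{Th1.2}, and the existence of a solution $u\in C^{3,\alpha}(\bar\Omega)$ is obtained by the method of continuity exactly as in that proof, starting from $\bar u$ as the initial solution at $t=0$. Uniqueness follows from Lemma \ref{comparison}.

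The main obstacle I anticipate is ensuring that the hypotheses of Theorem \ref{Th1.1} and Lemma \ref{gradient bound} transfer uniformly along the homotopy chosen in the continuity method: the deformation must preserve regularity and monotonicity of $A$, positivity and monotonicity of $B$, the strict monotonicity of $\varphi$, the structure conditions \eqref{QS} and \eqref{psi OT case}, and the uniform $A$-convexity of $\Omega$ with respect to the deformed $\varphi$ and the interval $[-C,\bar u]$, all with constants independent of the deformation parameter. The uniform lower bound $u\ge -C$ from Lemma \ref{Lemma 3.3} is exactly what supplies a homotopy-invariant interval on which the $A$-convexity condition need be checked, making this step tractable.
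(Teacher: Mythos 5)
Your plan follows the paper's intended route in all but one respect, and that one respect is a genuine gap. The paper does not discard the supersolution hypothesis in Corollary~\ref{Cor 3.1}: the hypotheses of Theorem~\ref{Th1.2} are retained except for the subsolution, so the supersolution $\bar u$ of Theorem~\ref{Th1.1} (satisfying both \eqref{super equation} and the \emph{equality} \eqref{super boundary}) is still part of the data. You instead propose to manufacture $\bar u$ as a $c$-affine function $c(\cdot,y_0)+c_0$, which is fine as a supersolution of the equation and (for $c_0$ large, by \eqref{uniform monotonicity}) satisfies only the inequality $D_\nu\bar u\le\varphi(\cdot,\bar u)$ on $\partial\Omega$. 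That is enough for the comparison principle, but it is \emph{not} enough to use $\bar u$ in the barrier construction in the proof of Theorem~\ref{Th1.1}: there, after setting $\tilde u=\bar u-a\phi$, the crucial boundary lower bound $D_\nu(\tilde u-u)\ge a$ (hence $D_\nu\Phi\ge Ka>0$ on $\partial\Omega$) is obtained precisely from the \emph{equality} $D_\nu\bar u=\varphi(\cdot,\bar u)$ combined with $\varphi$ non-decreasing and $\bar u\ge u$. With only the inequality $D_\nu\bar u\le\varphi(\cdot,\bar u)$ one obtains an upper bound on $D_\nu(\tilde u-u)$, not a lower bound, and the Case~2 boundary argument in the proof of Theorem~\ref{Th1.1} breaks down.

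The paper itself is explicit about this: the remarks after Theorem~\ref{Th1.2} state that the equality \eqref{super boundary} can be weakened to the inequality $D_\nu\bar u\le\varphi(\cdot,\bar u)$ only when $A$ is strictly regular or when $A$ has the special structure \eqref{special case} and $\Omega$ is uniformly convex; and the remark immediately after Corollary~\ref{Cor 3.1} emphasizes that removing the supersolution condition via a $c$-affine barrier and the end-of-Section~\ref{Section 2} argument is possible \emph{for convex domains} and costs $c(x,y)=c(x-y)$. In the generality of Corollary~\ref{Cor 3.1} as stated (regular $A$, general uniformly $A$-convex $\Omega$), the exact supersolution must be retained as a hypothesis. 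The rest of your argument is correct and matches the paper: the lower bound from Lemma~\ref{Lemma 3.3} applied with $\beta=\nu$ and $\varphi_0=-\gamma_0 m_0+\sup_{\partial\Omega}\varphi(\cdot,m_0)$, the upgrade of $A$-convexity with respect to $\varphi$ and $-C$ to $A$-convexity with respect to the solution $u$ (since $p\cdot\nu=\varphi(\cdot,u)\ge\varphi(\cdot,-C)$ for $u\ge-C$), the gradient bound from Lemma~\ref{gradient bound} via \eqref{QS}, and the Schauder and continuity-method steps.
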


We remark that as in \cite{LTU1986},  condition \eqref{sharp condition OT case} is necessary for an elliptic solution
$u\in C^2(\Omega)\cap C^{0,1}(\bar\Omega)$ of \eqref{PJE}, with $Du(x) \in \mathcal U_x$ for all $x\in \bar\Omega$. 

In accordance with our remarks following the statement of Theorem \ref{Th1.2}, pertaining to the special case
\eqref{special case}, and using the argument at the end of Section \ref{Section 2}, we can remove the supersolution condition in Corollary \ref{Cor 3.1} for convex domains. To apply the argument at the end of Section \ref{Section 2}, we also need to use the existence of an elliptic function, as provided by Lemma 2.1 in \cite{JT2014}. In this way, we obtain an extension of Theorem 1.1 in \cite{LTU1986}, which corresponds to the special case $c(x,y) = x\cdot y$, (or equivalently, the case $c(x,y)=- |x-y|^2/2$). Note that the matrix $A$ generated by the cost function satisfies \eqref{special case} when the cost $c=c(x-y)$. Examples of regular and strictly regular cost functions are given in \cite{TruWang2009} and \cite{LiuTru2010}. However most of these examples do not satisfy $\mathcal U_x = \mathbb{R}^n$ and in general we need additional controls on gradients to prove classical existence theorems.

We also remark that Lemma \ref{Lemma 3.3}
and Corollary \ref{Cor 3.1} are readily extended to generated prescribed Jacobian equations \cite{Tru2014}.

\section{Oblique boundary value problems }\label{Section 4}
\vskip10pt

In this section we consider more general oblique boundary value problems for Monge-Amp\`ere type equations under the hypothesis that the matrix function $A$ is strictly regular. As remarked in Section \ref{Section 1}, this condition also leads to a much simpler proof in the Neumann case. Also we do not need to restrict to semilinear problems of the form \eqref{G in oblique form} but can consider nonlinear boundary conditions of the general form \eqref{1.8}, where $G$ is also concave with respect to $p$. Our approach is already indicated in Section 4 of \cite{TruWang2009} and we will carry over some of the basic details from there. Moreover our results can also be seen as special cases of those for  general augmented Hessian equations in \cite{JT2015}.
For second derivative estimates, we will assume that the function
$G\in C^2(\partial\Omega\times\mathbb{R}\times\mathbb{R}^n)$ is oblique with respect to a solution $u$,
that is from \eqref {oblique},
\begin{equation}\label{obliqueness estimate}
G_p(\cdot,u,Du)\cdot\nu \ge\beta_ 0, \quad \mbox{on } \  \partial\Omega,
\end{equation}
for a positive constant $\beta_0$, and is concave in $p$, with respect to $u$, in the sense that
\begin{equation}\label{convexity}
G_{pp}(\cdot,u, Du) \le 0,  \quad \mbox{on } \  \partial\Omega.
\end{equation}
We now have the following extension and improvement of Theorem \ref{Th1.1} in the strictly regular case.

\begin{Theorem}\label{Th4.1}
Let $u\in C^4(\Omega)\cap C^{3}(\bar \Omega)$ be an elliptic solution of the boundary value problem \eqref{1.1}-\eqref{1.8} in a $C^{3,1}$  domain $\Omega\subset\mathbb{R}^n$, which is uniformly $A$-convex with respect to $G$ and $u$, where $A\in C^2(\bar \Omega\times \mathbb{R}\times \mathbb{R}^n)$ is strictly regular in $\bar\Omega$, $B>0,\in C^2(\bar \Omega\times \mathbb{R}\times \mathbb{R}^n)$  and $G\in C^{2,1}(\partial\Omega\times\mathbb{R}\times\mathbb{R}^n)$ satisfies \eqref{obliqueness estimate} and \eqref{convexity}.
Then we have the estimate
\begin{equation}\label{oblique C2 bound}
\sup\limits_\Omega |D^2 u|\le C,
\end{equation}
where $C$ is a constant depending on $n, A, B, G, \Omega, \beta_0$ and $|u|_{1;\Omega}$.
\end{Theorem}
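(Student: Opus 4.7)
The plan is to adapt the Pogorelov-type argument used for Theorem \ref{Th1.1} but exploit the strict regularity of $A$ to absorb the bad terms that previously required the supersolution barrier, while simultaneously generalising the boundary analysis to the nonlinear oblique setting using the concavity of $G$ in $p$. As before, it suffices to control $M_2 := \sup_\Omega \max_{|\xi|=1} w_{\xi\xi}$ from above, since the ellipticity condition $w_{ij} > 0$ together with the equation \eqref{1.1} then controls the remaining eigenvalues.

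First I would treat the interior part. Consider an auxiliary function of the form
\begin{equation*}
v(x,\xi) = w_{\xi\xi}\, \exp\Bigl(\tfrac{\alpha}{2}|Du|^2 + \kappa\,\eta(x)\Bigr),
\end{equation*}
where $\eta$ is a convenient smooth function (for instance $\eta = -|x|^2$ or a multiple of the defining function $\phi$ used in Section \ref{Section 2}). No exponential barrier built from a supersolution is required. When $v$ attains its maximum at an interior point $x_0$ and unit direction $\xi$, rotating coordinates so that $\{w_{ij}\}$ is diagonal with $w_{11}$ largest and $\xi = e_1$, then computing $\mathcal L \log v$ at $x_0$ and using the twice-differentiated equation \eqref{twice differentiated} yields
\begin{equation*}
0 \ge \mathcal L\log v \ge \frac{\mathcal L w_{\xi\xi}}{w_{\xi\xi}} - \frac{w^{ij}D_iw_{\xi\xi}D_jw_{\xi\xi}}{w_{\xi\xi}^2} + \alpha w_{ii} + \kappa L\eta - C.
\end{equation*}
The crucial new input is that strict regularity of $A$ gives a positive lower bound $\lambda_0|\xi|^2|\eta|^2$ (for $\xi \perp \eta$) in \eqref{regular}, which, when applied to the $D^2_{pp} A$ term appearing on the left of \eqref{twice differentiated} along the eigendirections transversal to $e_1$, produces a gain of order $\epsilon_0 w_{11}\mathcal T$. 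This extra term together with the standard concavity inequality \eqref{2.30} absorbs both the third-derivative square terms and the quadratic $w_{ii}^2$ term, yielding $w_{11}(x_0) \le C$ directly.

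For the boundary part the argument splits as in Theorem \ref{Th1.1}. The mixed tangential–normal bound comes by applying a tangential derivative $\delta_i$ to $G(x,u,Du) = 0$ and using obliqueness \eqref{obliqueness estimate} to solve for $D_{i\nu}u$. The double normal bound $D_{\nu\nu}u \le C(1+M_2)^{(n-2)/(n-1)}$ is then obtained exactly as in \eqref{3.10}, with $h = \varphi - \nu\cdot p$ replaced by $-G(x,u,Du)/(G_p\cdot\nu)$; the uniform $A$-convexity of $\Omega$ with respect to $G$ and $u$ provides the same defining function $\phi$ satisfying $L\phi \ge \delta_1\mathcal T$, so the barrier argument goes through. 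The remaining and most delicate step is the pure tangential bound on $\partial\Omega$. Here I would use a modified auxiliary function
\begin{equation*}
v(x,\xi) = w_{\xi\xi} - 2(\xi\cdot\nu)\xi'_i\bigl[\,(G_p\cdot\nu)^{-1}(G_{x_i} + G_z u_i)\nu_k + A_{ij}\nu_j\,\bigr] + \tfrac{\alpha}{2}|Du|^2 + \kappa \phi,
\end{equation*}
so that $D_\nu v \le 0$ at a boundary maximum produces an inequality of the form \eqref{combine*}. On the other hand, differentiating $G(\cdot,u,Du) = 0$ tangentially twice and contracting with a tangential unit vector $\xi$ produces the counterpart of \eqref{combine**}, where the quadratic form $G_{p_kp_l}u_{ki}u_{lj}\xi_i\xi_j$ appears with the correct sign thanks to the concavity hypothesis $G_{pp} \le 0$. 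Combining the two gives an inequality $(\kappa c_0 - \alpha M + c_G)w_{\xi\xi} \le Cw_{\xi\xi} + \epsilon M_2 + C_\epsilon$, so that $\kappa$ may be chosen large enough to close the estimate.

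The main obstacle will be the pure tangential boundary estimate, specifically isolating the correct auxiliary term $v'$ that is adapted to the nonlinear function $G$ (rather than the semilinear form $\nu\cdot p - \varphi$) and verifying that, after twice tangential differentiation of $G = 0$, the concavity $G_{pp} \le 0$ really generates a term of the right sign to combine constructively with the inequality coming from $D_\nu v \le 0$. Once this delicate computation is carried out—essentially a rewriting of subcase (iii) of Theorem \ref{Th1.1} in terms of $G$ instead of $\varphi$—the interior argument together with the mixed and double normal boundary bounds yields \eqref{oblique C2 bound}.
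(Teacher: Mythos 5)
The broad plan is reasonable — strict regularity does allow the supersolution barrier to be dropped — but there is a gap in the pure tangential boundary estimate that you have not addressed and that cannot be repaired by the same means.

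In the subcase (iii) argument of Theorem \ref{Th1.1}, the inequality \eqref{combine*} hinges on the fact that the barrier $\Phi=\frac{1}{\epsilon_1}e^{K(\tilde u-u)}$, built from the perturbed supersolution $\tilde u = \bar u - a\phi$, has \emph{inward} normal derivative bounded below, $D_\nu\Phi \ge c_0>0$ on $\partial\Omega$. This sign is what makes the coefficient $\kappa c_0 - \alpha M$ in front of $w_{\xi\xi}$ positive in \eqref{combine*}, so that combining with the lower bound \eqref{combine**} from the twice-tangentially differentiated boundary condition yields $(\kappa c_0 - \alpha M + \varphi_z)w_{\xi\xi} \le C + \epsilon M_2 + Cw_{\xi\xi}$ and $\kappa$ can be taken large to close. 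In your proposal you replace $\kappa\Phi$ by $\kappa\phi$ where $\phi$ is the negative defining function with $D_\nu\phi = -1$, so $D_\nu(\kappa\phi) = -\kappa < 0$ on $\partial\Omega$. The inequality $D_\nu v\le0$ at a boundary maximum then gives $D_\nu u_{\xi\xi} \le (\kappa - \alpha M)w_{\xi\xi} + \cdots$, which, combined with \eqref{combine**}, produces the useless inequality $(G_z/G_p\!\cdot\!\nu - \kappa + \alpha M)w_{\xi\xi} \le C + \cdots$; for large $\kappa$ the coefficient becomes negative and no bound on $w_{\xi\xi}$ results. Moreover you cannot fix the sign by switching to $-\kappa\phi$, since then $L(-\kappa\phi)\le -\kappa\delta_1\mathcal T$ has the wrong sign in the interior. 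Strict regularity of $A$ only enters through $\mathcal L$ applied to the auxiliary function and therefore does not help at all in the $D_\nu v\le 0$ step. The paper itself flags exactly this point (in the remark following the proof of Theorem \ref{Th1.1}): when $D_\nu\Phi\ge 0$ is unavailable one must \emph{additionally} assume $\varphi_z + 2\kappa_1>0$, which you do not have.

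The paper avoids the obstacle altogether by abandoning the maximum-over-$\bar\Omega$ scheme and instead considering $v = w_{\tau\tau} - K(1+M_2)\phi$ (with a fixed $C^2$ tangential extension $\tau_i = x_i - (x\cdot\nu)\nu_i$). Since $-K(1+M_2)D_\beta\phi\ge K(1+M_2)\beta_0>0$ on $\partial\Omega$, one can choose $K$ so that $D_\beta v>0$ on the entire boundary, forcing the maximum of $v$ to an interior point. There the strict regularity of $A$, applied to $w^{ij}A_{ij,kl}w_{k\tau}w_{l\tau}$ as in \eqref{4.11}, yields $D_{\tau\tau}u(x_0)\le C(1+M_2)^{1/2}$, and after passing to a defining function with $\inf\phi\ge -\epsilon$ one deduces $\sup_{\partial\Omega}|D^2u|\le \epsilon M_2 + C_\epsilon$. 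The global bound then follows by interpolating with the existing interior estimates of Ma--Trudinger--Wang / Trudinger--Wang. You would need to restructure your boundary argument along these lines; your subcase (iii), as written, cannot be made to close.
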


\begin{proof}
As in the proof of Theorem \ref{Th1.1}, we first consider the estimation of the nontangential second derivatives. In the semilinear case \eqref {G in oblique form}, we can simply replace $\nu$ by $\beta$ there and deduce in place  of  \eqref{2.15}, the estimate
\begin{equation}\label{4.4}
|D_{\beta\xi} u|\le C(1 + M_2)^\frac{n-2}{n-1}, \quad {\rm on}\ \partial \Omega,
\end{equation}
for any direction $\xi$, where as in Section \ref{Section 2}, $M_2 = \sup_\Omega |D^2u|$. In the general case, we have the same estimate \eqref{4.4}, from the estimate (4.4) in \cite{TruWang2009},
where now $\beta = G_p(\cdot,u,Du)$. Now differentiating the boundary condition \eqref{1.8} twice with respect to a tangential $C^2$ vector field $\tau$ we obtain as in the estimate (4.10)  in \cite {TruWang2009},
\begin{equation}\label{third deriv}
\begin{array}{rl}
u_{\tau\tau\beta} \!\!&\!\! \ge -D_{p_kp_l}G u_{k\tau}u_{l\tau} - C(1 +M_2)\\
                     \!\!&\!\! \ge - C(1 +M_2), \quad {\rm on}\ \partial \Omega,
\end{array}
\end{equation}
by virtue of the  concavity of $G$ with respect to $p$. For convenience we write here
$ u_{i\tau} = u_{ij}\tau_j$, $u_{\tau\tau} = u_{ij}\tau_i\tau_j$, $u_{\tau\tau\beta} = u_{ijk}\tau_i\tau_j\beta_k$.
To handle the pure tangential derivatives we extend the $C^2$ vector field $\tau$ to all of $\bar\Omega$ and set
\begin{equation}
v=w_{\tau\tau} -K(1+M_2)\phi,
\end{equation}
where as in the proof of Theorem \ref{Th1.1}, $\phi\in C^2(\bar\Omega)$ is a negative defining function for $\Omega$ satisfying $D_\nu \phi = -1$ on $\partial \Omega$ and $K$ is a constant
such that
\begin{equation}
 D_\beta [w_{ij} \tau_i\tau_j] >-K(1+M_2)\beta_0, \quad {\rm on}\ \partial \Omega.
\end{equation}
In particular we may fix $\tau$ with $\tau_i = x_i - (x\cdot\nu) \nu_i$, $i = 1, \ldots, n$, where as in Section \ref{Section 2}, $\nu$ is a smooth extension of the inner normal $\nu$ to $\bar\Omega$.
It then follows that $D_\beta v > 0$ on  $\partial \Omega$ so that $v$ must take its maximum on $\bar\Omega$ at an interior point $x_0\in\Omega$, with  $\mathcal Lv(x_0) \le 0$.
Now we can adapt the proof of the interior second derivative estimate in \cite{MTW2005} and \cite{TruWang2008}, differentiating the equation \eqref{1.1}, in the form \eqref{3.3}, twice with respect to $\tau$ and using also the concavity of the function ``$\log\det$'',  together with  \eqref{third deriv} to control $K$, to estimate at $x_0$,
\begin{equation}\label{pre-est}
w^{ij}A_{ij,kl}u_{k\tau}u_{l\tau} \le C[(1+ M_2)w^{ii} + |Du_\tau|^2].
\end{equation}
We note that when we twice differentiate  \eqref{1.1} with respect to a variable vector field $\tau$, to calculate
$\mathcal L v$, we  encounter terms arising from derivatives of $\tau$ which are not present in the constant case \eqref{twice differentiated}. Apart from the terms in third derivatives these can be directly estimated by $C(1+ M_2)w^{ii}$. Retaining the third derivative terms,  we would supplement the right hand side of \eqref{pre-est}, by
\begin{equation}\label{4.9}
\begin{array}{rl}
- w^{ik}w^{jl}\!\!&\!\!D_{\tau}w_{ij}D_{\tau}w_{kl}  + 4 w^{ij} D_i\tau_kD_\tau w_{jk} \\

              = \!\!&\!\! -w^{ik}w^{jl}D_{\tau}w_{ij}D_{\tau}w_{kl} + 4 w^{ik}w^{jl} w_{jk} D_i\tau_kD_\tau w_{kl}\\

              \le \!\!&\!\!-w^{ik}w^{jl}(D_\tau w_{ij} -2w_{jk} D_i\tau_k)(D_{\tau}w_{kl} - 2w_{il}D_j\tau_l) + 4w^{ik}w^{jl}w_{il}D_j\tau_lw_{jk} D_i\tau_k\\

              \le \!\!&\!\!4(D_i\tau_i)^2
\end{array}
\end{equation}
so that the estimate \eqref{pre-est} is unaffected. To use the strictly regular condition,
\begin{equation}
A_{ij,kl}\xi_i\xi_j\eta_k\eta_l \ge c_0|\xi|^2|\eta|^2,
\end{equation}
for all $\xi, \eta \in \mathbb{R}^n$ satisfying the orthogonality $\xi\perp \eta$, where $c_0$ is  a positive constant depending on $A$ and $|u|_{1;\Omega}$,
we choose coordinates so that $w$ is diagonalised at $x_0$, so that
\begin{equation}\label{4.11}
\begin{array}{rl}
w^{ij}A_{ij,kl}w_{k\tau}w_{l\tau} \!\!&\!\!\displaystyle = w^{ii}A_{ii,kl}w_{kk}w_{ll}\tau_k\tau_l\\
                                  \!\!&\!\!\displaystyle \ge\sum\limits_{k,l\ne i} w^{ii}A_{ii,kl}w_{kk} w_{ll}\tau_k\tau_l - CM_2\\
                                  \!\!&\!\!\displaystyle \ge c_0w^{ii} \sum(w_{kk}\tau_k)^2  - CM_2.
 \end{array}
 \end{equation}
Hence we obtain from  \eqref{pre-est}, \eqref{4.11} and \eqref{2.11},
 \begin{equation} \label{4.12}
D_{\tau\tau} u(x_0) \le C(1 +M_2)^\frac{1}{2} .
\end{equation}
At this point we need to return to our choice of $\phi$ to ensure that $\inf \phi \ge -\epsilon$ for some small positive constant $\epsilon$. This can be done for example by mollification of the function $ -\inf\{d,\epsilon\}$ for sufficiently small $\epsilon$, where the constant $C = C_\epsilon$ in \eqref{4.12} will depend also on $\epsilon$. Alternatively, we may simply restrict to a boundary strip $\Omega_\epsilon = \{\phi > - \epsilon\}$ and use the interior second derivative estimates \cite{MTW2005,TruWang2009} to estimate $v$ on the inner boundary $\{\phi = -\epsilon\}$.  Accordingly we obtain from \eqref{4.12},
$$ v(x_0) \le C_\epsilon(1 +M_2)^\frac{1}{2}  + \epsilon M_2$$
and hence we get an estimate
\begin{equation}\label{tangential deriv}
 D_{\tau\tau} u \le C_\epsilon(1 +M_2)^\frac{1}{2} + \epsilon M_2 \quad {\rm on}\ \partial \Omega.
 \end{equation}
Since for any direction $\xi$, we  have,
 \begin{equation}
u_{\xi\xi}=u_{\tau\tau}
    +b(u_{\tau\beta}+u_{\beta\tau})+b^2u_{\beta\beta},
\end{equation}
where
$$ b=\frac {\xi\cdot\gamma}{\beta\cdot \gamma},
 \ \ \ \tau=\xi-b\beta,$$
 we then obtain a boundary estimate in the form,
 \begin{equation}\label{bdy est}
\sup\limits_{\partial\Omega} |D^2 u|\le \epsilon M_2 + C_\epsilon,
\end{equation}
for any sufficiently small $\epsilon > 0$, by combining \eqref{4.4} and  \eqref{tangential deriv}.
The global second derivative estimate \eqref{oblique C2 bound} now follows from the global second derivative estimates in  \cite{TruWang2008,TruWang2009} by choosing $\epsilon$ sufficiently small.
 \end{proof}

 The details in the proof of Theorem \ref{Th4.1} can be further varied. For example we can replace $v$ by
\begin{equation}\label{alt v}
 v=(1-K\phi)w_{\tau\tau} ,
 \end{equation}
 for a sufficiently large constant $K$, where $\phi$ is the same negative defining function as in the proof of Theorem \ref{Th4.1}. As remarked in Section \ref{Section 1}, we also obtain a much simpler proof of Theorem \ref{Th1.1} in the strictly regular case, without need for the supersolution and monotonicity hypotheses. Moreover
 by flattening the boundary $\partial\Omega$ in a neighbourhood $\mathcal N$ of a fixed point
 $x_1\in \partial \Omega$, we can localise the second derivative estimate by modifying \eqref{alt v}
 \begin{equation}
 v=\eta(1-K\phi)w_{\tau\tau}
\end{equation}
where $\eta$ is a suitable cut-off function satisfying $D_\nu\eta = 0$ on $\mathcal N\cap\partial \Omega$.
 Accordingly, we obtain for any ball $B=B_R(x_0)$ of radius $R >0$
and centre $x_0$, the local estimate
\begin{equation}\label{local C2 bound}
 |D^2u(x_0)| \le  \frac{C}{R^2},
\end{equation}
 for elliptic solutions $u\in C^4(B\cap\Omega)\cap C^{3}(B\cap\bar \Omega)$ of \eqref{1.1} satisfying \eqref{1.2}
 on $B\cap\partial\Omega$, where $B\cap\partial\Omega$ is uniformly $A$-convex
  with respect to $G$ and $u$ in the sense that
 $$  (D_i\nu_j -D_{p_k}A_{ij}(\cdot,u,Du)\nu_k)\tau_i\tau_j \le-\delta_0$$
on $B\cap\partial\Omega$ for $G(x,u,Du)\ge 0$ and any unit tangential vector $\tau$ and a positive constant $\delta_0$. The constant $C$ in \eqref{local C2 bound} depends on $n, A, B, \Omega, \delta_0, \phi$ and $|u|_{1;\Omega}$.
We also point out that comparability of differentiation with respect to a general vector field and a constant vector field
in the proof of \ref{Th4.1}, which follows from the identity \eqref{4.9}, is special to the Monge-Amp\`ere case. A  different and more detailed proof of the critical tangential estimate \eqref{tangential deriv} is  provided for more general augmented Hessian equations in \cite{JT2015}, Lemma 2.3.

Returning to the example from conformal geometry in Section \ref{Section 1}, namely \eqref {transformed Yamabe},
\eqref {Euclidean Yamabe} with $\mathcal M = \Omega\subset \mathbb{R}^n$, the  $A$-convexity condition also simplifies in that $\Omega$ is uniformly $A$-convex with respect to $G$ and $u$ if and only if
\begin{equation}
\kappa_1 > -ce^{-u} + h_{\partial\Omega}  \quad \mbox{on } \  \partial\Omega,
\end{equation}
 where $\kappa_1$ denotes the minimum curvature of $\partial\Omega$, and Theorem \ref{Th4.1} extends the second derivative estimates in \cite {JLL2007} for this special case with $c>0$. We remark though that the strictly regular case in Theorem \ref{Th4.1} also extends to general augmented Hessian equations and  corresponding second derivative estimates for \eqref {transformed Yamabe} for general $f$ are proved in \cite{JT2015}.

From Theorem \ref{Th4.1}, we can obtain existence theorems, which also extend Theorem \ref{Th1.2} and Corollary \ref{Cor 3.1} in the strictly regular case. First we
prove an appropriate extension of the gradient bound Lemma \ref{gradient bound}.
\begin{Lemma}\label{general gradient bound}
Let $u\in C^2(\bar\Omega)$ satisfy \eqref{weak convexity}
in a $C^2$ domain  $\Omega\subset\mathbb{R}^n$ and
\begin{equation} \label{oblique control}
|D_\beta u| \le \sigma_0,\quad \beta\cdot\nu \ge\beta_0
\end{equation}
on $\partial\Omega$, where $\beta \in L^\infty(\partial\Omega)$, $|\beta| = 1$
and  $\sigma_0$ and $\beta_0$  are positive constants. Then we have the estimate
\begin{equation}\label{gradient bound estimate}
|Du|\le C,
\end{equation}
where $C$ depends on $\mu_0, \sigma_0, \beta_0,  \Omega$ and $\sup |u|$.
\end{Lemma}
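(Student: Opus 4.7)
Following the strategy of Lemma \ref{gradient bound}, I would begin by modifying $u$ via $\tilde u = u - a\phi$, where $\phi\in C^2(\bar\Omega)$ is a negative defining function of $\Omega$ with $D_\nu\phi = -1$ on $\partial\Omega$, and $a > 0$ is to be chosen. Since $D\phi = -\nu$ on $\partial\Omega$, the choice $a = \sigma_0/\beta_0$ ensures
\[
D_\beta\tilde u = D_\beta u + a(\beta\cdot\nu) \ge -\sigma_0 + a\beta_0 \ge 0 \quad\text{on } \partial\Omega,
\]
while the weak convexity of $u$ propagates to $\tilde u$:
\[
D^2\tilde u \ge -\mu_0(1+|Du|^2)I - a\Lambda_\phi I \ge -\mu_1(1+|D\tilde u|^2)I \quad\text{in } \Omega,
\]
with $\mu_1$ depending only on $\mu_0,\sigma_0,\beta_0$ and the $C^2$-norm of $\phi$, hence on $\Omega$.

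Next, I would introduce the auxiliary function $w = e^{\kappa\tilde u}|D\tilde u|^2$ on $\bar\Omega$, for a positive constant $\kappa > 2\mu_1$ to be determined, and examine its maximum point $x_0\in\bar\Omega$. At an interior maximum, the identity $D\tilde u\cdot Dw(x_0) = 0$ combined with the weak convexity of $\tilde u$ yields $|D\tilde u(x_0)|^2 \le 2\mu_1/(\kappa - 2\mu_1)$, exactly as in the proof of Lemma \ref{gradient bound}. At a boundary maximum $x_0\in\partial\Omega$, the tangential gradient of $w$ vanishes and $D_\nu w(x_0)\le 0$, so $Dw(x_0) = (D_\nu w)\nu$, and the obliqueness $\beta\cdot\nu \ge \beta_0 > 0$ yields
\[
\beta\cdot Dw(x_0) = (\beta\cdot\nu)\,D_\nu w(x_0) \le 0,
\]
equivalently $\kappa(D_\beta\tilde u)|D\tilde u|^2 + 2D^2\tilde u(\beta, D\tilde u) \le 0$ at $x_0$. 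Since $D_\beta\tilde u \ge 0$, this forces $D^2\tilde u(\beta, D\tilde u) \le 0$.

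The main obstacle will be to convert this sign information into a quantitative upper bound for $|D\tilde u(x_0)|$, since the one-sided bound on $D^2\tilde u$ does not directly control the off-diagonal bilinear form $D^2\tilde u(\beta, D\tilde u)$. My plan is to decompose $\beta = (\beta\cdot\nu)\nu + \beta^T$ with $|\beta^T|^2 \le 1 - \beta_0^2$ and $D\tilde u = (D_\nu\tilde u)\nu + D^T\tilde u$, and to combine the inequality $\beta\cdot Dw(x_0)\le 0$ with the identity $D\tilde u\cdot Dw(x_0) = (D_\nu\tilde u)(D_\nu w(x_0))$ to cancel the problematic cross terms involving $\beta^T$ and $D^T\tilde u$; the weak convexity then controls the diagonal forms $D^2\tilde u(D\tilde u, D\tilde u)$ and $D^2\tilde u(\nu,\nu)$ from below, so that for $\kappa$ chosen sufficiently large relative to $\mu_1$ and $\beta_0^{-1}$, one recovers a pointwise bound $|D\tilde u(x_0)|^2 \le C$ analogous to the interior case. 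Once $|D\tilde u|^2$ is bounded at the maximum in both cases, the definition of $w$ gives a global bound on $|D\tilde u|$ on $\bar\Omega$, and finally $|Du|\le |D\tilde u| + a|D\phi| \le C$, with $C$ depending on $\mu_0,\sigma_0,\beta_0,\Omega$ and $\sup|u|$, as required.
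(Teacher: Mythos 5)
Your proposal diverges from the paper's argument at a crucial point, and the gap you yourself flag as the ``main obstacle'' is real and not resolved by the plan you sketch.

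The auxiliary function you propose, $w = e^{\kappa\tilde u}|D\tilde u|^2$, uses the \emph{full} gradient. At a boundary maximum this forces you to extract information from $\beta\cdot Dw(x_0)\le 0$, i.e.
\[
\kappa(D_\beta\tilde u)\,|D\tilde u|^2 + 2\,D^2\tilde u(\beta, D\tilde u) \le 0,
\]
but this is quantitatively useless: the shift $\tilde u = u - a\phi$ with $a=\sigma_0/\beta_0$ only gives $D_\beta\tilde u\ge 0$, which can be exactly zero, and the off-diagonal bilinear form $D^2\tilde u(\beta, D\tilde u)$ is \emph{not} controlled from above by the one-sided bound $D^2\tilde u\ge -\mu_1(1+|D\tilde u|^2)I$ (only diagonal forms are). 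Your proposed ``cancellation'' by combining with $D\tilde u\cdot Dw(x_0) = (D_\nu\tilde u)D_\nu w(x_0)$ does not close the gap either: if you subtract the normal component, what survives is precisely the tangential identity $\beta^T\cdot Dw=0$ (already known from the maximality), and if instead you use the $D\tilde u\cdot Dw$ equation directly, it reduces to $\kappa|\delta\tilde u|^2|D\tilde u|^2 + 2D^2\tilde u(\delta\tilde u, D\tilde u)=0$, whose second term contains the uncontrolled off-diagonal piece $(D_\nu\tilde u)\,D^2\tilde u(\delta\tilde u,\nu)$ with a coefficient $D_\nu\tilde u$ that the oblique boundary condition does not bound below (unlike the Neumann case, $|D_\beta u|\le\sigma_0$ allows $D_\nu u$ to be arbitrarily negative when $|\delta u|$ is large).

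The missing idea is to maximize an auxiliary function built from the \emph{tangential} gradient only, $w = e^{\kappa u}|\delta u|^2$, over $\partial\Omega$, and to first observe via the obliqueness formula $D_\nu u = \frac{1}{\beta\cdot\nu}(D_\beta u - \beta\cdot\delta u)$ that the full gradient on $\partial\Omega$ is controlled by $|\delta u|$ plus data. This reduces the lemma to bounding $|\delta u|$ on $\partial\Omega$. At a boundary maximum of this $w$, the identity $\delta u\cdot\delta w = 0$ involves only the \emph{diagonal} form $D^2u(\delta u,\delta u)$ and lower-order geometric terms, both of which are under control via the weak convexity (now rewritten as $D^2 u\ge -\mu_1(1+|\delta u|^2)I$ on $\partial\Omega$). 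One then concludes exactly as in the interior case for $\kappa$ large, and the interior bound follows from Lemma~\ref{gradient bound}. Your approach as written does not reach this step, and the cross-term cancellation you sketch will not substitute for it.
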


\begin{proof}
Invoking the tangential gradient $\delta u$, we have the formula
\begin{equation}\label{normal}
D_\nu u = \frac {1}{ \beta\cdot\nu}(D_\beta u - \beta\cdot\delta u)
\end{equation}
so that we can estimate
\begin{equation}
|Du| \le  \frac {1}{ \beta_0}(|\delta u| + \sigma_0) + |\delta u|
\end{equation}
on $\partial\Omega$, whence from \eqref {weak convexity}, we obtain
\begin{equation}\label {boundary convexity}
D^2 u \ge - \mu_1(1 + |\delta u|^2) I
\end{equation}
on $\partial\Omega$, for a further constant $\mu_1$, depending on $\mu_0$, $\beta_0$ and $\sigma_0$.
Now we consider in place of \eqref{gradient auxi}, the function
\begin{equation}\label{4.16}
w= e^{\kappa u}|\delta u|^2,
\end{equation}
so that at a point $x_0 \in \partial\Omega$ where $w$ is maximised we have
\begin{equation}
\begin{array}{rl}
0 \!\!&\!\! =\delta u\cdot \delta w \\
   \!\!&\!\! = e^{\kappa u}( \kappa|\delta u|^4 + 2 \delta_i u \delta_j u \delta_i \delta_j u) \\
   \!\!&\!\! = e^{\kappa u}[ \kappa|\delta u|^4 + 2 \delta_i u \delta_j u( D_{ij}u - D_{\nu} u \delta_i \nu_j)]\\
   \!\!&\!\! \ge e^{\kappa u}[ \kappa|\delta u|^4 -2\mu_1|\delta u|^2(1 + |\delta u|^2) - C |\delta u|^2],
\end{array}
\end{equation}
from \eqref {normal} and \eqref{boundary convexity}, where $C$ is a constant depending on $\beta_0$, $\sigma_0$
and $\partial\Omega$. By choosing $\kappa$ sufficiently large we conclude
the estimate \eqref{gradient bound estimate} on $\partial\Omega$ and the estimate in all of $\Omega$ then follows from
\cite{JTY2013} or Lemma \ref{gradient bound}.
\end{proof}

Lemma \ref{general gradient bound} provides an extension of Theorem 2.2 in \cite{LTU1986} to the weaker convexity condition \eqref{weak convexity}.
If we assume a stronger quadratic control from below on the Hessian, namely
\begin{equation}\label {stronger quadratic}
D_{ij} u\xi_i \xi_j \ge - \mu_0(1 + |D_\xi u|^2)
\end{equation}
for some constant $\mu_0$ and any unit vector $\xi$, we can reduce to Theorem 2.2 and the corresponding remark in \cite{LTU1986} as condition
\eqref{stronger quadratic} implies that the function $e^{\kappa u}$ is semi-convex for large $\kappa$. We also remark that the gradient estimates in Lemma \ref{gradient bound} and Lemma \ref{general gradient bound} have local versions.
In particular, if we fix any ball $B = B_R(x_0)$ of radius $R$ and centre $x_0\in\bar\Omega$, and suppose
$u\in C^2(\Omega\cap B)\cap C^1(\bar\Omega\cap B)$ satisfies \eqref{weak convexity} in $\Omega\cap B$ and
\eqref{oblique control} in $\partial\Omega\cap B$, then we have an estimate
\begin{equation}\label{local gradient estimate}
|Du(x_0)|\le \frac{C}{R},
\end{equation}
where $C$  depends on $\mu_0, \sigma_0, \beta_0,  \Omega$ and $\sup |u|$. To prove \eqref{local gradient estimate} we modify our proof of the global estimate Lemma \ref{general gradient bound} by maximizing in place of the
auxiliary functions
in \cite{JTY2013} and \eqref{4.16} above, the  functions
\begin{equation}
w_1 = \eta^2 e^{\kappa u}|D u|^2, \quad w_2 = \eta^2e^{\kappa u}|\delta u|^2
\end{equation}
over $\Omega\cap B$, $\partial\Omega\cap B$ respectively, where $\eta \in C^1_0(B)$ is a cut-off function chosen so that $0 \le\eta\le1$, $\eta(x_0) = 1$ and $|D\eta|\le 2/R$.

Note that \eqref{stronger quadratic} is satisfied in the special case \eqref{Euclidean Yamabe} so we obtain, for solutions of \eqref{transformed Yamabe},
\eqref {Euclidean Yamabe}, both local and global, gradient and second derivative estimates in terms of $\Omega$, $h_{\partial\Omega}$ and $\sup|u|$.

In order to apply Lemma \ref{general gradient bound}, we also need to assume that $G$ is uniformly oblique in the sense that
\begin{equation}\label{G uniformly oblique}
G_p(x,z,p)\cdot\nu \ge \beta_ 0,\quad |G_p(x,z,p)| \le \sigma_0 \quad \mbox{on } \  \partial\Omega,
\end{equation}
for all $x\in \Omega$, $|z|\le M_0$, $p\in \mathbb{R}^n$ and  positive constants $\beta_0$ and $\sigma_0$, depending on the constant $M_0$. Using the mean value theorem, we can thus write $G$ in the
semilinear form \eqref{G in oblique form} so that Lemma \ref{general gradient bound}, as well as the solution estimates
in Section \ref{Section 3}, are applicable.

We then have the following analogue of Theorem \ref{Th1.2} with a much weaker supersolution condition.

\begin{Theorem}\label{Th4.2}
Suppose that $A, B,G$ and $\Omega$ satisfy the hypotheses of Theorem \ref{Th4.1}  with $G$ uniformly oblique satisfying \eqref {G uniformly oblique}
and concave in $p$ for all
$(x,z,p)\in \partial\Omega\times\mathbb{R}\times\mathbb{R}^n$.  Assume also that $A$ and $B$
 are non-decreasing in $z$, $G$ is strictly decreasing in $z$,
A satisfies \eqref{QS} and that there exists a supersolution $\bar u$ and an elliptic subsolution $\underline u$ of equation \eqref{1.1} in
$C^2(\Omega)\cap C^1(\bar \Omega)$ satisfying $\mathcal G[\bar u] \le 0$ and $\mathcal G[\underline u] \ge 0$ respectively on $\partial\Omega$
with $\Omega$ uniformly $A$-convex with respect to $G$ and $\mathcal I = [ \underline u, \bar u]$.
Then the boundary value problem
\eqref{1.1}-\eqref{1.8} has a unique elliptic solution $u\in
C^{3,\alpha}(\bar \Omega)$ for any $\alpha<1$.
\end{Theorem}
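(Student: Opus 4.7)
The plan is to mimic the proof of Theorem \ref{Th1.2}, with Theorem \ref{Th4.1} in place of Theorem \ref{Th1.1} and Lemma \ref{general gradient bound} in place of Lemma \ref{gradient bound}, assembling \emph{a priori} $C^{2,\alpha}(\bar\Omega)$ bounds and then invoking the method of continuity. Uniqueness follows immediately from Lemma \ref{comparison}: since $G$ is strictly decreasing in $z$ and uniformly oblique, any two elliptic solutions must coincide. For the solution bound, I would apply Lemma \ref{comparison} twice, using the assumed subsolution $\underline u$ and supersolution $\bar u$ together with the boundary inequalities $\mathcal G[\underline u]\ge 0\ge \mathcal G[\bar u]$, to conclude $\underline u\le u\le\bar u$ in $\Omega$ for every elliptic solution $u$ of \eqref{1.1}--\eqref{1.8}.

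For the gradient estimate, I would use the $A$-convexity of $u$ together with the structure condition \eqref{QS} to deduce the weak convexity \eqref{weak convexity}. By the mean value theorem applied to the boundary condition $G(\cdot,u,Du)=0$ and the uniform obliqueness \eqref{G uniformly oblique}, I can rewrite the boundary condition in the semilinear form $\beta\cdot Du = \varphi(x,u)$ for some unit vector field $\beta\in L^\infty(\partial\Omega)$ with $\beta\cdot\nu\ge\beta_0$ and $\varphi$ bounded in terms of $\sup|u|$, so that \eqref{oblique control} holds. Lemma \ref{general gradient bound} then yields $\sup_\Omega|Du|\le C$. The second derivative bound $\sup_\Omega|D^2u|\le C$ is supplied directly by Theorem \ref{Th4.1}, since $\Omega$ is uniformly $A$-convex with respect to $G$ and $u$ by the assumption on $\mathcal I=[\underline u,\bar u]$ (combined with $\underline u\le u\le\bar u$). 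Once $C^2$ bounds are in hand, the global H\"older estimate $|u|_{2,\alpha;\Omega}\le C$ for some $\alpha\in(0,1)$ follows from the theory of \cite{LT1986,LieTru1986}, which applies to fully nonlinear oblique boundary value problems with concave $F$ and $G$.

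With the \emph{a priori} $C^{2,\alpha}(\bar\Omega)$ estimate established, I would set up a homotopy connecting \eqref{1.1}--\eqref{1.8} to a problem with a known elliptic solution. A convenient choice is
\[
\mathcal F_t[u] = t F(x,u,Du,D^2u) + (1-t)\bigl(\det[D^2u - A(x,u,Du) ] - \det[D^2 \bar u - A(x,\bar u,D\bar u)]\bigr) = 0,
\]
together with $\mathcal G_t[u] = t G(x,u,Du) + (1-t)(D_\nu u - D_\nu\bar u) = 0$ on $\partial\Omega$, so that at $t=0$, $u=\bar u$ solves the problem while at $t=1$ we recover \eqref{1.1}--\eqref{1.8}. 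For each $t\in[0,1]$ the problem satisfies all hypotheses needed to run Theorem \ref{Th4.1}, Lemma \ref{general gradient bound} and Lemma \ref{comparison} uniformly in $t$, yielding a uniform $C^{2,\alpha}$ bound on solutions along the family. Openness of the set of solvable $t$ follows from the implicit function theorem applied in $C^{2,\alpha}(\bar\Omega)$, using that the linearisation is a uniformly elliptic oblique boundary value problem with the standard sign conditions ($F_z\le 0$, $G_z<0$) (see \cite{GTbook}, Theorems 17.22 and 17.28), and closedness follows from the \emph{a priori} estimate and Arzel\`a--Ascoli. Higher regularity $u\in C^{3,\alpha}(\bar\Omega)$ is then obtained from the linear Schauder theory after first smoothing the data and then passing to the limit, exactly as at the end of the proof of Theorem \ref{Th1.2}.

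The main obstacle I foresee is verifying that the continuity path genuinely preserves the $A$-convexity of $\Omega$ with respect to $G_t$ (and hence the applicability of Theorem \ref{Th4.1}) and the ellipticity of solutions along the family, since the deformation mixes the nonlinear $G$ with a purely Neumann condition. This should be manageable because both $G$ and the Neumann operator satisfy the uniform obliqueness and concavity conditions, and the $A$-convexity hypothesis is formulated on the interval $\mathcal I=[\underline u,\bar u]$ containing every $u_t$, but the uniform dependence on $t$ needs to be checked carefully; in particular one may wish to work instead with the simpler homotopy $\mathcal F_t[u] = \mathcal F[u] - (1-t)\mathcal F[\bar u]$, $\mathcal G_t[u] = \mathcal G[u] - (1-t)\mathcal G[\bar u]$, which leaves the structure unchanged. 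Everything else is routine given the preceding lemmas.
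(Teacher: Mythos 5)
Your proposal assembles exactly the right ingredients and follows the strategy the paper implicitly intends (the paper supplies no written proof of Theorem \ref{Th4.2}, only the remarks that precede its statement): Theorem \ref{Th4.1} for the second derivative bound, Lemma \ref{general gradient bound} for the gradient bound after the mean--value--theorem rewriting of $G$ in semilinear form, Lemma \ref{comparison} for uniqueness and the bound $\underline u\le u\le\bar u$, the Lieberman--Trudinger/Lions--Trudinger global $C^{2,\alpha}$ estimate, and finally the method of continuity with a Schauder bootstrap. That outline is correct.

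There is, however, one genuine gap in the way you set up the continuity method. Both homotopies you propose are anchored at $u=\bar u$ at $t=0$, and you invoke the implicit function theorem there, which requires the linearised problem at $\bar u$ to be uniformly elliptic and oblique. But in Theorem \ref{Th4.2}, unlike Theorem \ref{Th1.2}, the supersolution $\bar u$ is \emph{not} assumed to be elliptic: the adjective ``elliptic'' modifies only $\underline u$, and the discussion after Theorem \ref{Th1.2} makes explicit that in the strictly regular case one ``only needs to assume the supersolution $\bar u$ satisfies \eqref{super equation} at points where it is elliptic.'' If $D^2\bar u - A(\cdot,\bar u,D\bar u)$ is not positive definite everywhere, then $\bar u$ is not an admissible initial solution, and in your first homotopy the prescribed right--hand side $tB + (1-t)\det[D^2\bar u - A(\cdot,\bar u,D\bar u)]$ may not even be positive for small $t$, so the intermediate problems fail to be elliptic Monge--Amp\`ere equations. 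The fix (which the paper hints at in the remarks following Corollary \ref{Cor 4.1}) is to construct a separate elliptic function $u_0\in C^2(\bar\Omega)$, as provided by Lemma 2.1 of \cite{JT2014} in the strictly regular case, and anchor the homotopy at $u_0$, keeping the supersolution $\bar u$ and subsolution $\underline u$ purely for the a priori solution bound via Lemma \ref{comparison}. Beyond this the rest of your argument is sound.
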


Analogously to Corollary \ref{Cor 3.1}, we also have from Lemma \ref{Lemma 3.3} an existence theorem in the optimal transportation case.
Here we may also extend the condition \eqref{uniform monotonicity} by assuming there exists a positive constant $\gamma_0$
such that
\begin{equation}\label {G uniformly monotone}
G_z(x,z,p)\le-\gamma_0
\end{equation}
for all $(x,z,p)\in \partial\Omega\times\mathbb{R}\times\mathbb{R}^n$.

\begin{Corollary}\label{Cor 4.1}
 Suppose that equation \eqref{1.1} is a prescribed Jacobian equation of the form \eqref{PJE} generated by a cost function $c\in C^2(\mathcal{D})$ satisfying conditions A1 and A2 and $\mathcal{U}_x=\mathbb{R}^n$ for all $x\in \Omega$, with $\psi$ satisfying the structure conditions \eqref{psi OT case}, \eqref{sharp condition OT case}. Suppose also that $A, B, G$ and $\Omega$ satisfy the hypotheses of Theorem \ref{Th4.1}  with $G$ uniformly oblique satisfying \eqref {G uniformly oblique}, uniformly monotone satisfying \eqref{G uniformly monotone}
and concave in $p$ for all $(x,z,p)\in \partial\Omega\times\mathbb{R}\times\mathbb{R}^n$, $A$ satisfying \eqref{QS}, $B$ non-decreasing  and $\Omega$ uniformly $A$-convex with respect to $G$ and $-C$, where C is the constant in Lemma \ref{Lemma 3.3}. Then the boundary value problem \eqref{1.1}-\eqref{1.8} has a unique elliptic solution $u\in C^{3,\alpha}(\bar \Omega)$ for any $\alpha <1$.
\end{Corollary}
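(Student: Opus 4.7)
The plan is to assemble the a priori estimates developed in Sections \ref{Section 3} and \ref{Section 4} and conclude by the method of continuity, paralleling the proofs of Theorem \ref{Th1.2} and Corollary \ref{Cor 3.1} but in the strictly regular, general oblique setting. First I would derive a uniform $C^0$ bound. For the upper bound, since $\mathcal U_x=\mathbb{R}^n$ for all $x$, any $c$-affine function $\bar u(\cdot)=c(\cdot,y_0)+c_0$ satisfies the homogeneous equation \eqref{equation OT case} and is thus a supersolution of \eqref{1.1}; choosing $c_0$ large and using the strict monotonicity \eqref{G uniformly monotone}, I can arrange $\mathcal G[\bar u]\le 0$ on $\partial\Omega$, so the comparison principle Lemma \ref{comparison} gives $u\le\bar u$. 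For the lower bound, uniform obliqueness \eqref{G uniformly oblique} and uniform monotonicity \eqref{G uniformly monotone} let me rewrite $G(\cdot,u,Du)=\beta\cdot Du-\varphi(\cdot,u)$ via the mean value theorem, with $\beta\cdot\nu\ge\beta_0$ and $\varphi_z\ge\gamma_0$. The structural conditions \eqref{psi OT case}--\eqref{sharp condition OT case} then place me within the hypotheses of Lemma \ref{Lemma 3.3}, yielding $u\ge -C$ for a constant $C$ depending only on the data.

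Next, with $\sup_\Omega|u|$ controlled, the quadratic structure condition \eqref{QS} combined with $A$-admissibility yields the weak convexity \eqref{weak convexity}, while the semilinear representation of $G$ and \eqref{G uniformly oblique} give the boundary bound \eqref{oblique control}. Lemma \ref{general gradient bound} then supplies a uniform gradient estimate. For second derivatives, Theorem \ref{Th4.1} applies directly: the hypotheses include the strict regularity of $A$ (inherited from the cost via \eqref{AB OT case}), concavity of $G$ in $p$, and uniform $A$-convexity of $\Omega$ with respect to $G$ and $\mathcal I=\{-C\}$. The critical observation is that because $A$ is independent of $z$ in the optimal transportation case and $G$ is monotone decreasing in $z$, the $A$-convexity condition at $z=-C$ transfers to $u$: the admissible set of boundary $p$ satisfying $G(\cdot,z,p)\ge 0$ shrinks as $z$ increases from $-C$ to $u(x)$, so \eqref{A convexity} remains valid along the solution.

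Once all derivative bounds through second order are in hand, the global $C^{2,\alpha}$ estimate from \cite{LT1986} together with the method of continuity (Theorems 17.22 and 17.28 in \cite{GTbook}), applied to a one-parameter family deforming the boundary operator to a simple Neumann-type condition for which a $c$-affine function is an exact solution, yields existence of $u\in C^{3,\alpha}(\bar\Omega)$; uniqueness follows from Lemma \ref{comparison} by the strict monotonicity of $G$ in $z$. I expect the main obstacle to lie in constructing the continuity family so that all hypotheses---the uniform obliqueness and monotonicity constants $(\beta_0,\sigma_0,\gamma_0)$, the concavity of $G$ in $p$, and the uniform $A$-convexity of $\Omega$ with respect to $G$---persist uniformly along the deformation, so that the preceding a priori estimates apply with bounds independent of the continuity parameter.
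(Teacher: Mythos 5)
Your proposal assembles the a priori estimates exactly as the paper intends: the lower $C^0$ bound from Lemma \ref{Lemma 3.3} after the mean-value-theorem reduction of $G$ to semilinear form \eqref{G in oblique form}, the upper bound from $c$-affine supersolutions via the comparison principle Lemma \ref{comparison}, the gradient bound from Lemma \ref{general gradient bound}, the second derivative bound from Theorem \ref{Th4.1}, and then global $C^{2,\alpha}$ regularity plus the method of continuity, with uniqueness again from Lemma \ref{comparison} using \eqref{G uniformly monotone}. This is precisely the route the paper has in mind, and your observation that $A$-convexity at $z=-C$ transfers to $z=u(x)$ because $A$ is $z$-independent in the optimal transportation case while $G_z<0$ shrinks the admissible $p$-set is the correct justification for invoking Theorem \ref{Th4.1}.
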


Finally we remark that when $G$ is assumed uniformly concave with respect to $p$, we only need $A$ to be regular in Theorems \ref{Th4.1}, \ref{Th4.2} and Corollary \ref{Cor 4.1} and the global second derivative estimates follow exactly as in Section 4 of \cite{TruWang2009}; see also \cite{Urbas1998}. Also the proof of Theorem \ref{Th4.1} would carry over to the cases when $G$ is non-increasing and $A$ is non-decreasing,  with either $G_z$ sufficiently small or $D_zA$ sufficiently large and $A$ again only assumed regular, (using in the first case the existence of an elliptic function and Lemma \ref{Lemma barrier}).


\end{document}